\documentclass[12pt]{amsart}
\textwidth=13.5cm

\usepackage{amsthm, amssymb, amsfonts}
\usepackage{bbm} 
\usepackage[pdfborder={0 0 0}]{hyperref}
\usepackage{color}
\usepackage{enumerate}
\usepackage[capitalise]{cleveref}
\usepackage{autonum}
\usepackage[foot]{amsaddr}

\makeatletter
\newcommand{\restore@Environment}[1]{%
  \AtBeginDocument{%
    \csletcs{#1*}{#1}%
    \csletcs{end#1*}{end#1}%
  }%
}
\forcsvlist\restore@Environment{alignat,equation,gather,multline,flalign,align}
\makeatother

\renewcommand{\epsilon}{\varepsilon}

\newcommand{\n}{\mathbb{N}}
\newcommand{\z}{\mathbb{Z}}
\newcommand{\q}{\mathbb{Q}}
\renewcommand{\r}{\mathbb{R}}

\newcommand{\f}{\mathbb{F}}
\renewcommand{\a}{\mathbb{A}}
\newcommand{\dd}{\; \mathrm{d}}

\renewcommand{\vec}[1]{\underline{#1}}
\renewcommand{\phi}{\varphi}
\newcommand{\pdv}[2]{\frac{\partial #1}{\partial #2}}
\newcommand{\one}{\mathbbm{1}}

\DeclareMathOperator{\Mod}{mod}

\DeclareMathOperator{\Sing}{Sing}

\theoremstyle{plain} 
\newtheorem{thm}{Theorem}[section]
\newtheorem{lem}[thm]{Lemma} 
\newtheorem{cor}[thm]{Corollary} 
\newtheorem{prop}[thm]{Proposition}

\theoremstyle{definition}

\theoremstyle{remark}
\newtheorem{remark}[thm]{Remark}

\title[Quantitative Results on Diophantine Equations]{Quantitative Results on Diophantine Equations in~Many~Variables}
\author{Jan-Willem M. van Ittersum}
\address{\scriptsize Mathematisch Instituut, Universiteit Utrecht, Postbus 80.010, 3508 TA Utrecht, Nederland}
\email{j.w.m.vanittersum@uu.nl}
\thanks{I would like to thank my supervisor Damaris Schindler for helpful discussions and suggesting research questions and ideas.}
\subjclass[2010]{%
Primary 11D72; 
Secondary 11P55
}
\date{\today}

\begin{document}

\begin{abstract}
\begin{sloppypar}
We consider a system of integer polynomials 
of the same degree with non-singular local zeros and in many variables. Generalising the work of Birch \cite{Bir62} we find a quantitative asymptotic formula (in terms of the maximum of the absolute value of the coefficients of these polynomials) for the number of integer zeros of this system within a growing box. Using a quantitative version of the Nullstellensatz, we obtain a quantitative strong approximation result, i.e. an upper bound on the smallest non-trivial integer zero provided the system of polynomials is non-singular.
\end{sloppypar}
\end{abstract}

\maketitle


\section{Introduction}
Consider a system $\vec{f}$ of polynomials $f_1, \ldots, f_r\in \z[x_1, \ldots, x_n]=\z[\vec{x}]$ of degree $d\geq 2$. It was shown by Birch \cite{Bir62}, that if these polynomials are homogeneous, they satisfy the {smooth Hasse principle} providing \begin{equation}\label{eq:nlarge}
n-\dim V^*>r(r+1)(d-1)2^{d-1}
,\end{equation} where $V^*$ is the so-called Birch singular locus
of the the projective variety $V$ corresponding to $\vec{f}$. Let $V^{sm}$ be the smooth locus of $V$ (which consists of the points where the the Jacobian matrix of $\vec{f}$ has rank $r$).
Then, the system $\vec{f}$ is said to satisfy the \emph{smooth Hasse principle} if
$$\prod_{\nu} V^{sm}(\q_\nu) \neq \emptyset \quad \text{implies that} \quad V(\q)\neq \emptyset.$$
Here, the product is over all places $\nu$ of $\q$ and $\q_\infty=\r$. 

In this paper we are interested in the distribution and the size of the rational points on $V$ (or integer points on $V$ when the system is not assumed to be homogeneous). More specifically, let $\mathcal{V}_{\z}$ be an integral model of $V$. Let $\a^\infty$ be the adele ring of $\q$ outside the place $\infty$ and let $\mathcal{V}_{\a^\infty}$ be the base change of $\mathcal{V}_{\z}$ to $\a^\infty$. We say that $V$ satisfies \textit{strong approximation outside $\infty$} if the image of the diagonal map
$\mathcal{V}_{\z}\to \mathcal{V}_{\a^\infty}$
is dense.  Note that the notion of strong approximation outside $\infty$ implies the smooth Hasse principle.
For $V$ as in Birch's theorem strong approximation outside $\infty$ holds. \cref{thm:cormain} is a quantitative version of this statement, which is a first step in understanding the distribution of the integer zeros of arbitrary systems of integer polynomials. This result follows directly from our main theorem stated in \cref{sec:mr}. 
 In order to obtain this result we generalise the work of Birch \cite{Bir62} to find quantitative asymptotics (in terms of the maximum of the absolute value of the coefficients of these polynomials) for the number of integer zeros of this system within a growing box. Using a quantitative version of the Nullstellensatz, we obtain an upper bound on the smallest non-trivial common zero of $\vec{f}$. 

\subsection{Related work}\label{sec:rw} There are many improvements on Birch's result if we restrict to a single form. For example, Heath-Brown showed that a cubic form has a non-trivial integer zero provided only that $n\geq 14$ \cite{Hea07}. Assuming that the variety $V$ is non-singular, a form of degree $2$, $3$, or $4$ satisfies the smooth Hasse principle provided that $n\geq 3$, $n\geq 9$ or $n\geq 40$ respectively \cite{Hea96,Hoo88,Han35}. Browning and Prendiville slightly relaxed condition (\ref{eq:nlarge}), by showing that for a form of degree $d\geq 3$ the smooth Hasse principle holds provided that $n-\dim V^* \geq \left(d-\tfrac{1}{2}\sqrt{d}\right)2^d$ \cite{BP}. 

Recent results by Myerson improve on Birch's result for systems of forms when $V$ is a complete intersection (which is implied by (\ref{eq:nlarge}) in Birch's theorem). He shows that under this condition one can replace condition (\ref{eq:nlarge}) by $n\geq 9r$ respectively $n\geq 25r$ for systems of degree $2$ respectively $3$ \cite{Mye15,Mye17}. 

Unconditional improvements include the observation that $\dim V^*$ can replaced by a smaller quantity $\Delta$, to be defined in (\ref{eq:defK}),
as shown independently by Dietmann and Schindler \cite{Die15,Sch14}. Another improvement is the observation by Schmidt that the assumption that the system of polynomials is homogeneous is not necessary \cite{Sch85}. We make use of these improvements in this work.

There are known results on the smallest zero of a single form in many variables. Let $\Lambda(f)$ be the smallest integer zero of a form $f\in \z[x_1,\ldots, x_n]$ with coefficients bounded in absolute value by $C$. If $d=2$, Cassels showed that
$$\Lambda(f) \leq c_n C^{\frac{n-1}{2}},$$
where the constant $c_n$ is explicit and depends only on $n$. This estimate has the best possible exponent, i.e.\ for all $n$ and $C$, there exists an $f\in \z[x_1,\ldots, x_n]$ and a constant $d_n$ only depending on $n$ such that $\Lambda(f)\geq d_nC^{\frac{n-1}{2}}$ \cite{Cas55,Cas56}. However, for generic quadratic forms one can do much better \cite{BD08}. Sardari proved an optimal strong approximation theorem for $f-N$, where $f$ is a non-degenerate quadratic form and $N$ a sufficiently large integer \cite{Sar15}. 

If $d=3$ the best possible exponent for sufficiently large $n$ is smaller than the exponent $(n-1)/2$ in Cassel's result. Browning, Dietmann and Elliot showed that $\Lambda(f)\leq c C^{360000}$ for some absolute constant $c$ provided $n\geq 17$, whereas by a result due to Pitman  one has for any $\epsilon>0$ and sufficiently large $n$ that 
$\Lambda(f)\leq c_{n,\epsilon} C^{\frac{25}{6}+\epsilon},$
for some constant $c_{n,\epsilon}$ \cite{Bro11,Pit68}. In case the hypersurface corresponding to $f$ has at most isolated ordinary singularities, the former authors provide visibly better bounds, e.g. $\Lambda(f)\leq c C^{1071}$ for $n=17$. In fact, Browning, Dietmann and Elliot wonder whether their ideas ``\emph{could be adapted to handle non-singular forms of degree exceeding $3$}'' analogous to ``\emph{the extension of Birch {\upshape\cite{Bir62}} to higher degree of Davenport's treatment of cubic forms {\upshape\cite{Dav63}}}''. The main result of this paper is that this indeed possible, although their method to achieve effective lower bounds for the singular series and integral is completely different from ours. 

\subsection{Main result}\label{sec:mr} Let $\vec{\tilde{f}}$ denote the top degree part of the system $\vec{f}$. Let $V$ and $\widetilde{V}$ denote the affine and projective variety corresponding to $\vec{f}$ and $\vec{\tilde{f}}$ respectively. Let $C$ and $\widetilde{C}$ be the (real) maximum of the absolute value of the coefficients of $\vec{f}$ and $\vec{\tilde{f}}$ respectively. 
For any $\vec{b}\in \z^r$, we let 
$\tilde{f}_{\vec{b}}=b_1\tilde{f}_1+\ldots+b_r\tilde{f}_r$. For a form $g$ we let $\Sing(g)$ be the singular locus of $g$ in affine space. Define the quantity $\Delta$ of Dietmann and Schindler and $K$ by
\begin{align}\label{eq:defK}
\Delta = \max_{\vec{b}\in \z^r\backslash\{0\}} \left(\dim\mathrm{Sing}(\tilde{f}_{\vec{b}})\right) \quad \text{and} \quad
K=\frac{n-\Delta}{2^{d-1}}.
\end{align}
Throughout this work we assume that
\begin{align}\label{eq:Birchass} K>r(r+1)(d-1) .\end{align}
In particular, $V$ is a complete intersection. Note that (\ref{eq:Birchass}) corresponds to Birch's assumption on the number of variables (i.e. equation (\ref{eq:nlarge})) after replacing the dimension of the Birch singular locus by $\Delta$. 

Our main theorem, which is proven in \cref{sec:strongapprox}, makes Birch's result, stated in the second sentence of this paper, quantitative in terms of $C$ and $\widetilde{C}$:
\begin{thm}\label{thm:main1}
Let $f_i\in \z[\vec{x}]=\z[x_1,\ldots, x_n]$ for $i=1,\ldots, r$ be polynomials of degree $d$ so that
$K-r(r+1)(d-1)>0$, $\vec{f}$ has 
 a zero over $\z_p$ for all primes $p$ and $\vec{\tilde{f}}$ has a 
 real zero. Assume that the affine and projective varieties $V$ and $\widetilde{V}$ corresponding to $\vec{f}$ and $\vec{\tilde{f}}$ respectively are non-singular. Then there exists an $\vec{x}\in \z^n\backslash\{\vec{0}\}$, polynomially bounded by $C$ and $\widetilde{C}$, such that $\vec{f}(\vec{x})=\vec{0}$, in fact
\begin{equation}\label{eq:mtestimate}
\max_{1\leq i\leq n} |x_i|\leq c (C^3\widetilde{C}^2)^{2n^2r^{n+1}(d-1)^nd\cdot\frac{K+r(r+1)(d-1)}{K-r(r+1)(d-1)}},
\end{equation}
where the constant $c$ does not depend on $C$ or $\widetilde{C}$. 
\end{thm}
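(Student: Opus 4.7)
The strategy is to apply the quantitative circle-method counting asymptotic developed earlier in the paper to integer zeros of $\vec f$ inside the box $[-B,B]^n$, and then to choose $B$ as small as possible so that the resulting count is strictly positive; any integer zero so produced then verifies \eqref{eq:mtestimate}. One expects an expansion of the form
\[
N(B) \;=\; \mathfrak{S}\,\mathfrak{I}\,B^{n-Rd} \;+\; O\!\left( C^{a}\widetilde{C}^{b}\,B^{n-Rd-\delta}\right),
\]
where $\mathfrak S$ is the singular series, $\mathfrak I$ is the singular integral, and the saving $\delta>0$ is proportional to the gap $K - R(R+1)(d-1)$ from~\eqref{eq:Birchass}. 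Balancing then reduces the problem to effective lower bounds on $\mathfrak S \mathfrak I$ polynomial in $C$ and $\widetilde C$, the exponents $a,b$ being already supplied by the counting theorem.

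To produce these lower bounds I would invoke a quantitative effective Nullstellensatz. For $\mathfrak I$, the assumptions that $\widetilde V$ is smooth with $\widetilde V(\r)\neq\emptyset$, combined with an effective implicit function argument, produce a smooth real zero of $\tilde{\vec f}$ of height polynomial in $\widetilde C$, whence $\mathfrak I \gg \widetilde C^{-\beta}$ by a local density computation around that point. For $\mathfrak S$, treated as an Euler product, good reduction and Hensel's lemma make the factors for primes $p$ above an explicit threshold close to $1$; the threshold is polynomial in $C$ and $\widetilde C$, with a factor of $(Rd)^n$ entering through a B\'ezout-type bound on the discriminant locus of $\vec f$. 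For the finitely many remaining primes, the assumed existence of a $\z_p$-zero is promoted via the Nullstellensatz to a smooth $\z_p$-point of bounded height, and Hensel lifting yields a matching quantitative lower bound on the local density. Together these assemble into $\mathfrak S\,\mathfrak I \gg (C^3\widetilde C^2)^{-\gamma}$ for an explicit $\gamma$.

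Substituting these bounds into the requirement $N(B)>0$ and solving for $B$ produces the exponent in~\eqref{eq:mtestimate}: the rational factor $(K+R(R+1)(d-1))/(K-R(R+1)(d-1))$ emerges from balancing the ratio of main term to error in the power $B^{-\delta}$, while the polynomial factors $n^{3}R$ and $(Rd)^{n}$ come from the degree-theoretic input of the effective Nullstellensatz combined with the dimensions of the parameter and solution spaces. The main obstacle is keeping all exponents uniform and explicit end to end: the Euler product lower bound for $\mathfrak S$ must come with a truly effective rate of convergence, and the heights of the smooth real and $p$-adic points produced by the Nullstellensatz have to combine cleanly with the circle-method error rather than accumulating further factors of $d$, $n$, or $R$ that would spoil the clean form of~\eqref{eq:mtestimate}.
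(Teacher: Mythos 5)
Your plan is correct and follows essentially the same route as the paper: apply the quantitative counting asymptotic (here \cref{thm:ass}) to $M(P,\vec 0)$, derive lower bounds for the singular series and singular integral from the effective Nullstellensatz of Krick--Pardo--Sombra, and solve for the smallest $P$ making the main term dominate, with the ratio $(K+R(R+1)(d-1))/(K-R(R+1)(d-1))$ coming from the relation between $\delta$ and the allowed major-arc width and the factor $(Rd)^n$ coming from the Nullstellensatz degree bound. One small imprecision worth flagging: the Nullstellensatz input is \emph{not} used to manufacture new smooth real or $p$-adic zeros of bounded height, but to bound from below the sizes of the $R\times R$ Jacobian minors at the \emph{given} zeros (normalised to $|\vec x_0|=1$ in the real case, and automatically smooth in the $p$-adic case because $V$ is nonsingular over $\overline{\q}$); this is what quantifies the Hensel lifting in \cref{lem:lowld}--\cref{prop:s0} and the inverse-function-theorem box volume in \cref{lem:ift}--\cref{cor:J0}.
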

The case that the system $\vec{f}$ is homogeneous is treated separately in \cref{thm:main2}. 

 \begin{remark} The bound in above theorem is in no sense believed to be optimal and far worse than the known bounds for small degrees discussed in \cref{sec:rw}. However, we provided an upper bound in a far more general setting; it was not shown that such an upper bound exists in our setting. Moreover, as pointed out in \cref{ex:exp}, in constrast to the bounds for small degrees discussed before, the exponent should grow exponentially in $n$ when $d\geq 4$ is even. The main contribution to this bound is due to our lower bound for the singular series and integral, which follows from a quantitative version of the Nullstellensatz by D'Andrea, Krick and Sombra \cite{AKS12} as discussed on \cref{p:null}. This theorem, although sharp in general, is not believed to be sharp in the present setting. It would be interesting to explore whether a stronger quantitative version of the Nullstellensatz can be applied in this setting, yielding a significant improvement in the bound (\ref{eq:mtestimate}).
 \end{remark}

\subsection{Structure of this paper}
In \cref{sec:qa} we generalise the work of Birch \cite{Bir62} to deduce an asymptotic formula (quantitative in $C$ and $\widetilde{C}$) for the number of integer points on $V$ within a box $P\mathcal{B}$ for $P\to \infty$, which is the content of \cref{thm:ass}. Familiarity with Birch's work is not necessary: we prove all results which are direct generalisations of his work. We obtain lower bounds for the singular series and integral (introduced in \cref{sec:ss} and \cref{sec:si} respectively) in \cref{sec:lbss} and \cref{sec:lbsi}. We end with a proof of our main theorems in \cref{sec:strongapprox}.

\subsection{Notation}
On the vector space $\q_p^n$ ($p$ prime or $p=\infty$) we introduce the sup norm 
$|\vec{\alpha}|_p = \max_{1\leq i\leq n}|\alpha_i|_p$, 
where  $|\cdot|_p$ is the absolute value on $\q_p$. We write $|\cdot|$ for $|\cdot|_\infty$.
For $\beta\in \r$, we let
$\|\beta\|=\min_{i\in\z}|i-\beta|$ be the least distance of $\beta$  to an integer
 and for a point $\vec{\alpha}\in \r^n$ we write 
$\|\vec{\alpha}\|=\max_{1\leq i \leq n}\|\alpha_i\|.$
If $\vec{a}\in \z^m$ and $q\in \z$, we abbreviate $\gcd(a_1,\ldots, a_m,q)$ by $\gcd(\vec{a},q).$ For $x\in \r$ and $q\in \z$ we abbreviate $e^{2\pi i x}$ by $e(x)$ and $e^{2\pi i x/q}$ by $e_q(x)$. 
For functions $f,g$ defined on a subset of the real numbers we use Vinogradov's notation $f\ll g$ to mean $f=O(g)$. Without an indication the implied constant may depend on $n, r$ and $d$, but not on $C$ or $\widetilde{C}$. 

Let $\mathcal{E}$ denote the box $[-1,1]^n$ and let $\mathcal{B}$ be an $n$-dimensional box contained in $\mathcal{E}$ of side-length at most 1, i.e. there are $a_j, b_j\in \r$ with $-1\leq a_j\leq b_j\leq 1$ and $0< b_j-a_j <1$ such that $\mathcal{B}$ is given by
$\prod_{j=1}^n [a_j,b_j].$
We abbreviate sums of the form $\displaystyle \sum_{\vec{x}\in P\mathcal{B} \cap \z^n}$ by $\displaystyle \sum_{\vec{x}\in P\mathcal{B}}$.

\section{A quantitative asymptotic formula for the number of integer zeros}\label{sec:qa}

\subsection{Estimates of exponential sums}
Let $\vec{\alpha}\in [0,1)^r$ and $\vec{\nu}\in \z^r$. We obtain estimates for exponential sums 
\begin{align}\label{def:Saq}
S(\vec{\alpha})=\sum\limits_{\vec{x}\in P\mathcal{B}} e(\vec{\alpha} \cdot \vec{f}(\vec{x})) \quad \quad \text{and} \quad \quad S(\vec{\alpha},\vec{\nu})=S(\vec{\alpha})e(-\vec{\alpha}\cdot \vec{\nu})
\end{align}
depending on $\alpha_1, \ldots, \alpha_r$ not being too well approximable by rational numbers with small denominators. 

Let $M(P, \vec{\nu})$ denote the number of integer points in the box $P\mathcal{B}$ satisfying $\vec{f}(\vec{x})=\vec{\nu}$. This counting function 
is the principal object of study in the first part of this work. 
 Observe that
\begin{align}\label{eq:Mpv}
M(P, \vec{\nu}) =\int_{\vec{\alpha} \in [0,1)^r} S(\vec{\alpha},\vec{\nu}) \dd \vec{\alpha}.
\end{align}

The following lemma, generalising \cite[Lemma 2]{Sch14}, enables us to split the right-hand side of (\ref{eq:Mpv}) into a main term and an error term. 

\begin{lem}\label{lem:3cases}
Let $\epsilon>0$ and $0<\theta<1$. One of the following holds:
\begin{enumerate}[\upshape(i)]
\item \label{it:3cases1} $\displaystyle{|S(\vec{\alpha})|\ll P^{n-K\theta+\epsilon}}$;
\item \label{it:3cases2} (rational approximation to $\vec{\alpha}$ with respect to the parameter $\theta$) there are $\vec{a}\in \z^r$ and $q\in \z_{>0}$ such that $\gcd(\vec{a},q)=1$,
\begin{align*}
2|q\vec{\alpha}-\vec{a}|< \widetilde{C}^{r-1} P^{-d+r(d-1)\theta} \quad \text{and} \quad q < \widetilde{C}^r P^{r(d-1)\theta}.
\end{align*}
\end{enumerate}
\end{lem}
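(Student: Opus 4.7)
The plan is to follow Birch's Weyl-differencing strategy, upgraded with the Dietmann--Schindler refinement using $\Delta$ in place of $\dim V^*$, and to track the dependence on $\widetilde C$ throughout.

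First I would raise $|S(\vec\alpha)|$ to the power $2^{d-1}$ and iterate the Weyl squaring-and-differencing identity $d-1$ times. Since every $f_i$ has degree exactly $d$, the inhomogeneous lower-order terms drop out of the $(d-1)$-fold forward differences (this is Schmidt's observation, which is what makes $\widetilde C$ rather than $C$ the relevant parameter here), leaving only contributions from the top-degree forms $\tilde f_i$. Evaluating the resulting innermost geometric sum in $\vec x$ then produces a bound of the shape
$$|S(\vec\alpha)|^{2^{d-1}} \ll P^{(2^{d-1}-1)n}\sum_{|\vec h_j|\leq P}\prod_{k=1}^n\min\!\bigl(P,\,\|\vec\alpha\cdot\vec\Gamma_k(\vec h_1,\ldots,\vec h_{d-1})\|^{-1}\bigr),$$
where $\Gamma_{k,i}(\vec h_1,\ldots,\vec h_{d-1})$ is the coefficient of $x_k$ in the symmetric $d$-linear form associated to $\tilde f_i$, hence an integer multilinear form in $\vec h_1,\ldots,\vec h_{d-1}$ with coefficients of size $O(\widetilde C)$.

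Next I would dyadically decompose the minima and argue contrapositively: if case (\ref{it:3cases1}) fails, so $|S(\vec\alpha)|\gg P^{n-K\theta+\epsilon}$, then a positive proportion of tuples $(\vec h_1,\ldots,\vec h_{d-1})$ of height $\leq P^\theta$ must simultaneously satisfy $\|\vec\alpha\cdot\vec\Gamma_k(\vec h)\|\ll P^{-d+(d-1)\theta}$ for every $k=1,\ldots,n$. The Dietmann--Schindler version of Birch's geometry-of-numbers lemma (Lemma 3.3 of \cite{Bir62}, sharpened to $\Delta$) bounds the number of such tuples from above, producing exactly the exponent $K=(n-\Delta)/2^{d-1}$ and thereby supplying a nontrivial small tuple $(\vec h_1^*,\ldots,\vec h_{d-1}^*)$ at which all $n$ linear combinations of $\vec\alpha$ are close to $\z$.

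Finally, a simultaneous Dirichlet approximation applied to $\alpha_1,\ldots,\alpha_R$, with the linear forms provided by $\vec\Gamma_k(\vec h^*,\ldots)$, yields $\vec a\in\z^R$ and $q\in\z_{>0}$ with $(\vec a,q)=1$ satisfying the inequalities of case (\ref{it:3cases2}). The factors $\widetilde C^R$ and $\widetilde C^{R-1}$ arise purely from the sizes of these linear forms' coefficients, compounded $R$ times in the denominator bound for $q$ and $R-1$ times in the error bound for $|q\vec\alpha-\vec a|$. The main obstacle is the quantitative bookkeeping of $\widetilde C$ through the dyadic counting and the Dirichlet step, since Schindler's Lemma 2 in \cite{Sch14} is stated without this dependence; ensuring that no factor of $\widetilde C$ is lost or overcounted at each reduction, and that the asymmetric powers $R$ versus $R-1$ emerge in the correct places, is where the careful work lies.
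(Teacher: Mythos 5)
Your proposal takes essentially the same route as the paper: Weyl-difference $|S(\vec\alpha)|^{2^{d-1}}$ to obtain the counting quantity $N(P^\theta,P^{-d+(d-1)\theta};\vec\alpha)$ formed from the multilinear forms $\Gamma_i$ of the \emph{top-degree} parts (Birch, Lemma~2.4, which is coefficient-independent), apply the Dietmann--Schindler geometry-of-numbers shrinking argument with $\Delta$ replacing $\dim V^*$ to isolate a small tuple $\vec h^*$ at which all $\|\smallsum_i\alpha_i\Gamma_i(\vec e_j,\vec h^*)\|$ are small, and finally extract a rational approximation using that each $\Gamma_i(\vec e_j,\vec h^*)$ is an integer of size $\ll\widetilde C P^{\theta(d-1)}$ --- exactly the paper's adaptation of Schindler's Lemma~2 and Theorem~1. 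One mislabel worth correcting: the last step is not a Dirichlet-type simultaneous approximation (which is unconditional and far too weak to produce an error of order $P^{-d}$), but a linear-algebra inversion: pick an invertible $R\times R$ integer submatrix $M$ of $(\Gamma_i(\vec e_j,\vec h^*))_{j,i}$, set $q=|\det M|\ll(\widetilde C P^{\theta(d-1)})^R$, and bound $|q\vec\alpha-\vec a|$ by the entries of $\mathrm{adj}(M)$, which are $(R-1)\times(R-1)$ minors of size $\ll(\widetilde C P^{\theta(d-1)})^{R-1}$; your own bookkeeping of $\widetilde C^R$ versus $\widetilde C^{R-1}$ is precisely this computation, so the mechanism you describe is right even though the name is not.
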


\begin{proof}
\begin{sloppypar} Following the proof of Lemma 2 in \cite{Sch14}, we start by letting $\Gamma_i(\vec{x}^{(1)},\ldots, \vec{x}^{(d)})$ for $1\leq i \leq r$ be the multilinear form associated to $\tilde{f}_i$, satisfying $\Gamma_i(\vec{x},\ldots, \vec{x})=d!\tilde{f}_i(\vec{x}).$ Let $N(P^\xi,P^{-\eta};\vec{\alpha})$ be the number of integer vectors $\vec{x}^{(i)} \in \z^n$ for $2\leq i\leq d$ such that $|\vec{x}^{(i)}|\leq P^\xi$ and
$$\left\|\sum_{i=1}^r \alpha_i \Gamma_i\left(\vec{e}_j, \vec{x}^{(2)},\ldots, \vec{x}^{(d)}\right)\right\|<P^{-\eta} \quad\quad \text{for all } 1\leq j\leq n.$$
We introduce an $r \times nN(P^\theta,P^{-d+(d-1)\theta};\vec{\alpha})$ matrix $\Psi$, the rank of which enables us to distinguish between two cases. The entries of this matrix $\Psi$ are given by $\Gamma_i\left(\vec{e}_j, \vec{x}^{(2)},\ldots, \vec{x}^{(d)}\right)$ where the rows of $\Psi$ are indexed by $i$ and the columns of $\Psi$ by $\left(j, \vec{x}^{(2)},\ldots, \vec{x}^{(d)}\right)$ for $1\leq j\leq n$ and $\vec{x}^{(2)},\ldots, \vec{x}^{(d)}$ running over all vectors counted by $N(P^\theta,P^{-d+(d-1)\theta};\vec{\alpha})$. 

\underline{Case (\ref{it:3cases1}): $\mathrm{rank}\Psi < r$.} In this case, there exists a $\vec{b}\in \z^r\backslash\{\vec{0}\}$ such that $\sum\nolimits_{i=1}^r b_i \Psi_{i,l}=0$ for all $l$. In particular, the system of equations
\[\sum_{i=1}^{r} b_i \Gamma_i\left(\vec{e}_j, \vec{x}^{(2)},\ldots, \vec{x}^{(d)}\right)=0 \quad \quad (1\leq j\leq n)\]
has at least $N(P^\theta,P^{-d+(d-1)\theta};\vec{\alpha})$ integer solutions $\vec{x}^{(i)} \in \z^n$ for $2\leq i\leq d$ with $|\vec{x}^{(i)}|\leq P^\theta$. On the other hand, the number of solutions is $\ll P^{(d-2)n\theta+\Delta\theta+\epsilon}$ by the last equation on page 212 and the third equation on page 216 in \cite{Sch14}. 

Now, suppose $k$ is such that $|S(\vec{\alpha})|>P^{n-k}$. Lemma 2.4 in \cite{Bir62} implies that
$$N(P^\theta,P^{-d+(d-1)\theta};\vec{\alpha}) \gg P^{(d-1)n\theta-2^{d-1}k-\epsilon},$$
 which follows from Weyl's inequality and Davenport's application of the the geometry of numbers. This estimate is independent of the coefficients of $\vec{f}$, i.e. the implied constant does not depend on $C$ or $\widetilde{C}$. Hence, we find that 
 \[P^{(d-2)n\theta+\Delta\theta+\epsilon} \gg P^{(d-1)n\theta-2^{d-1}k-\epsilon},\]
so that  $k\geq K\theta$. Therefore,
 $$|S(\vec{\alpha})|<P^{n-K\theta+\epsilon}.$$
 
\underline{Case (\ref{it:3cases2}): $\mathrm{rank}\Psi = r$.} In this case there is an $r\times r$ submatrix of $\Psi$ of full rank, which we denote by $\hat{\Psi}$. Let $\vec{p_l}$ denote the $l$-th column of $\hat{\Psi}^{-1}$. We now define $\vec{a}$ and $q$ by 
$$q=\det \hat{\Psi}, \quad\quad \vec{a} = q\sum_{l=1}^r \vec{p_l} \left((\hat{\Psi}\vec{\alpha})_l-\big\|(\hat{\Psi}\vec{\alpha})_l\big\|\right).$$
As $\hat\Psi$ is an integer matrix and $(q\vec{p_l})_{l=1}^r$ is the adjoint of $\hat{\Psi}$, it follows that $\vec{a}\in (\z_{\geq 0})^r$ and $q\in \z$. Moreover, $q$ is non-zero as $\hat{\Psi}$ has full rank. After removing a common factor of $q$ and the $a_i$, the conditions $\gcd(q,\vec{a})=1$ and $q>0$ are satisfied.  We check that $\vec{a}$ and $q$ satisfy the remaining properties of (\ref{it:3cases2}) in the lemma.   As every entry of $\Psi$ can be estimated by $\widetilde{C}P^{(d-1)\theta}$, we find 
\begin{align} \label{eq:q} 
q\ll \widetilde{C}^rP^{r(d-1)\theta}.
\end{align} 
Moreover,
\[|q\alpha_i-a_i| = |q|\bigg|\sum_{l=1}^r(\hat{\Psi}^{-1})_{i,l}\big\|(\hat{\Psi}\vec{\alpha})_l\big\|\bigg|.\]
By Cramer's rule $q(\hat{\Psi}^{-1})_{i,l}\ll \widetilde{C}^{r-1}P^{(r-1)(d-1)\theta}$ and one has $\|(\hat{\Psi}\vec{\alpha})_l\|\ll P^{-d+(d-1)\theta}$ by construction of $\Psi$. Hence, 
\begin{align}\label{eq:approx}|q\alpha_i-a_i| \ll \widetilde{C}^{r-1}P^{(r-1)(d-1)\theta}P^{-d+(d-1)\theta}.\end{align}
Finally, note that by scaling $\theta$, the implied constants in (\ref{eq:q}) and (\ref{eq:approx}) can be transferred to the implied constant in (\ref{it:3cases1}). \qedhere
\end{sloppypar}
\end{proof} 

For $\vec{a}\in \z^r$ and $q\in \z_{>0}$,
let
\begin{align*}\label{def:Saqdiscrete}
S_{\vec{a},q}=\sum\limits_{\vec{x}  (q)} e_q(\vec{a}\cdot \vec{f}(\vec{x})) \quad \quad \text{and} \quad \quad
S_{\vec{a},q}(\vec{\nu})=S_{\vec{a},q}e_q(-\vec{a}\cdot \vec{\nu}).
\end{align*}
Here, the summation is over a complete set of residues modulo $q$ for every vector component of $\vec{x}$. The following lemma, which is a corollary of \cref{lem:3cases} and generalizes {\cite[Lemma 5.4]{Bir62}}, will be useful when we define the singular series in terms of $S_{\vec{a},q}$ in \cref{sec:ss}.

\begin{lem}\label{lem:Saq} For every $\epsilon>0$ we have
\begin{align} \label{eq:lemSaq} 
|S_{\vec{a},q}|\ll \widetilde{C}^{K/(d-1)}q^{n-K/r(d-1) + \epsilon}.
\end{align}
\end{lem}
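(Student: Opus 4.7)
The plan is to relate the complete exponential sum $S_{\vec{a},q}$ to the partial sum $S(\vec{a}/q)$ via the standard residue-class unfolding and then invoke \cref{lem:3cases}. For any $P\in\z_{>0}$, splitting the summation range $P\mathcal{B}\cap\z^n$ by residues $\vec{r}\pmod q$ and using the congruence $\vec{f}(\vec{r}+q\vec{y})\equiv \vec{f}(\vec{r})\pmod q$ for $\vec{y}\in\z^n$ (which follows from integrality of the coefficients), one obtains the unfolding identity
\begin{equation*}
S(\vec{a}/q) = \frac{P^n\,\mathrm{vol}(\mathcal{B})}{q^n}\,S_{\vec{a},q} + O\bigl(qP^{n-1}\bigr),
\end{equation*}
where the error accumulates boundary terms across the $q^n$ residue classes. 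I treat $P$ as a free parameter, chosen to optimise the resulting bound.

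If $q\leq \widetilde{C}^R$ the trivial bound $|S_{\vec{a},q}|\leq q^n$ already suffices, since $q^{K/R(d-1)}\leq \widetilde{C}^{K/(d-1)}$. For $q>\widetilde{C}^R$, I would fix $\theta\in(0,1)$ small (depending on $\epsilon$) and choose $P$ as the integer part of $(q/\widetilde{C}^R)^{1/(R(d-1)\theta)}$. Applying \cref{lem:3cases} to $\vec{\alpha}=\vec{a}/q$, I would rule out case (ii) as follows: were it to hold with some rational approximation $(\vec{a}',q')$, $(\vec{a}',q')=1$, the bound $|q'\vec{a}/q - \vec{a}'|\leq \widetilde{C}^{R-1}P^{-d+R(d-1)\theta}$ is, by the choice of $P$, strictly less than $1/q$. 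Since $\vec{a}/q$ and $\vec{a}'/q'$ are both written in lowest terms, this forces $\vec{a}'/q' = \vec{a}/q$, hence $q'=q$. But then $q = q'\leq \widetilde{C}^R P^{R(d-1)\theta} < q$, a contradiction. Therefore case (i) must apply, giving $|S(\vec{a}/q)|\ll P^{n-K\theta+\epsilon}$.

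Combining with the unfolding identity (the choice of $P$ is arranged so that the main term $|S_{\vec{a},q}|(P/q)^n$ dominates the error $qP^{n-1}$ whenever $|S_{\vec{a},q}|$ is near the target), one obtains $|S_{\vec{a},q}|\ll q^n P^{-K\theta+\epsilon}$. Substituting $P^{-K\theta}\asymp (q/\widetilde{C}^R)^{-K/R(d-1)}$ yields the advertised bound $|S_{\vec{a},q}|\ll \widetilde{C}^{K/(d-1)}q^{n - K/R(d-1)+\epsilon'}$ after relabelling $\epsilon'$. The main obstacle is the bookkeeping: a single $P$ must satisfy simultaneously $\widetilde{C}^R P^{R(d-1)\theta} < q$ (to derive a contradiction from case (ii)), $P^{d-R(d-1)\theta} > q\widetilde{C}^{R-1}$ (so that the rational approximation is unique), and $|S_{\vec{a},q}|P \gg q^{n+1}$ (so that the main term dominates the unfolding error), with $\theta$ small enough relative to $\epsilon$ to absorb the loss from the Weyl-type bound. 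All three constraints follow from the above choice in the regime $q > \widetilde{C}^R$, but verifying their compatibility requires some care, and is where the quantitative dependence on $\widetilde{C}$ in \cref{lem:3cases} is essential.
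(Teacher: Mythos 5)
Your core idea is the right one and is exactly how Birch proves his Lemma 5.4: apply \cref{lem:3cases} to $\vec{\alpha}=\vec{a}/q$ and rule out case \upshape(ii) by a lowest-terms argument so that case \upshape(i) delivers the bound. However, the combination of a \emph{fixed} $\theta$ (depending only on $\epsilon$) with the varying $P\approx (q/\widetilde{C}^R)^{1/(R(d-1)\theta)}$ and the unfolding identity does not close in the full range $q>\widetilde{C}^R$. Two of the "bookkeeping" conditions you list fail when $q$ is only slightly larger than $\widetilde{C}^R$. Take $q=2\widetilde{C}^R$ with $\widetilde{C}\to\infty$: then $P=\lfloor 2^{1/(R(d-1)\theta)}\rfloor$ stays bounded, but (a) the inequality $\widetilde{C}^{R-1}P^{-d+R(d-1)\theta}<1/q$ needed to exclude approximations with $q'<q$ becomes (using $\widetilde{C}^RP^{R(d-1)\theta}\asymp q$) the condition $P^d>q^2/\widetilde{C}\asymp\widetilde{C}^{2R-1}$, which is violated; and (b) for the unfolding error $qP^{n-1}$ to be $\ll \widetilde{C}^{K/(d-1)}q^{n-K/R(d-1)}(P/q)^n$ you need $P\gg q^{1+K/R(d-1)}\widetilde{C}^{-K/(d-1)}\asymp\widetilde{C}^R$, also violated. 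So with $\theta$ independent of $q$ and $\widetilde{C}$ your argument has a genuine gap in the range $\widetilde{C}^R< q\ll\widetilde{C}^{O(1)}$.

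The clean fix, which is also how Birch proceeds, is to avoid the unfolding altogether: take $P=q$ and a box $\mathcal{B}$ for which $P\mathcal{B}\cap\z^n$ is a complete residue system modulo $q$, so that $S(\vec{a}/q)=S_{\vec{a},q}$ \emph{exactly}, with no error term. Then the parameter $\theta$ must depend on $q$ and $\widetilde{C}$: for $q>\widetilde{C}^R$ take $\theta$ just below
\begin{equation*}
\frac{1}{R(d-1)}\left(1-\frac{R\log\widetilde{C}}{\log q}\right)\in(0,1).
\end{equation*}
With this $\theta$ one computes $\widetilde{C}^R q^{R(d-1)\theta}<q$, which rules out $q'=q$ in case \upshape(ii), and $\widetilde{C}^{R-1}q^{-d+R(d-1)\theta}<1/q$ (equivalent to $q^{d-2}\widetilde{C}\geq 1$, true for $d\geq 2$), which rules out $q'<q$. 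Case \upshape(i) then gives $|S_{\vec{a},q}|\ll q^{n-K\theta+\epsilon}$, and the key point is that for this choice of $\theta$ one has the identity $q^{-K\theta}=\widetilde{C}^{K/(d-1)}q^{-K/R(d-1)}$, which is precisely the claimed bound. (For $q\leq\widetilde{C}^R$ the trivial bound $q^n$ suffices, as you observed.) Your approach can also be repaired by letting $\theta$ and hence $P$ depend on $q$ and $\widetilde{C}$, but then the unfolding is an unnecessary detour compared with $P=q$.
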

\begin{proof}
Observe that $S_{\vec{a}',q'}$ is a particular case of $S(\vec{\alpha})$ with $P=q'$ and $\vec{\alpha}=\vec{a}'/q'$. Take $\theta$ such that $r(d-1)\theta<1-\log_q(\widetilde{C}^r).$ Then the inequalities in case (\ref{it:3cases2}) of \cref{lem:3cases} read
$$|q\vec{a}'-q'\vec{a}|<q'^{2-d}, \quad 1\leq q<q'.$$
As $d\geq 2$ this system has no solutions, hence (\ref{it:3cases1}) is satisfied. 
\end{proof}

\subsection{Minor arcs}
Let 
\[\mathcal{M}(\theta) = \{(\vec{a},q)\in \z_{\geq 0}^r \times\z_{>0} : \gcd(\vec{a},q)=1, \ |\vec{a}|\leq q < \widetilde{C}^r P^{r(d-1)\theta}\}\]
be the set of all pairs $(\vec{a},q)$ occurring in \cref{lem:3cases}(\ref{it:3cases2}). 
Given $(\vec{a},q)\in\z^r \times\z_{>0}$ and $0<\theta\leq 1$, define a \textit{major arc} by
\begin{align*}
\mathfrak{M}_{\vec{a},q}(\theta)=
\{\alpha \in [0,1)^r : 2|q\vec{\alpha}-\vec{a}|< \widetilde{C}^{r-1}P^{-d+r(d-1)\theta}\}.
\end{align*}
Then, define the \textit{major arcs} to be
\begin{align}\label{eq:majorarcs}
\mathfrak{M}(\theta)=\bigcup_{(\vec{a},q)\in \mathcal{M}(\theta)} \mathfrak{M}_{\vec{a},q}(\theta).\end{align}
Observe that $\mathfrak{M}(\theta)$ consists of all $\vec{\alpha}$ satisfying (\ref{it:3cases2}) in \cref{lem:3cases}. Define the\textit{ minor arcs} by $\mathfrak{m}=[0,1)^r\backslash\mathfrak{M}$. The contribution of $\vec{\alpha} \in \mathfrak{m}$ to the integral in (\ref{eq:Mpv}) will be considered as an error term. In order to estimate this error term, we first estimate the volume of $\mathfrak{M}(\theta)$, generalising \cite[Lemma 4.2]{Bir62}:

\begin{lem}\label{lem:vol}
The major arcs $\mathfrak{M}(\theta)$ have volume at most
\begin{align}\label{eq:maxvol} \widetilde{C}^{r^2}P^{-rd+r(r+1)(d-1)\theta}.\end{align}
\end{lem}
\begin{proof}
Each major arc $\mathfrak{M}_{\vec{a},q}(\theta)$ has volume
$\left(q^{-1}\widetilde{C}^{r-1}P^{-d+r(d-1)\theta}\right)^r.$
As $\mathfrak{M}(\theta)$ is the (not necessarily disjoint) union of major arcs, an upper bound for the volume of $\mathfrak{M}(\theta)$ is given by
\begin{align*}&\sum_{(\vec{a},q)\in \mathcal{M}(\theta)} \left(q^{-1}\widetilde{C}^{r-1}P^{-d+r(d-1)\theta}\right)^r.\qedhere\end{align*}
\end{proof}

If the major arcs are disjoint, we can write
$$\displaystyle\int_{\mathfrak{M}(\theta)} S(\vec{\alpha},\vec{\nu}) \dd \alpha = \sum_{(\vec{a},q)\in \mathcal{M}(\theta)} \displaystyle\int_{\mathfrak{M}_{\vec{a},q}(\theta)} S(\vec{\alpha},\vec{\nu}) \dd \alpha.$$
This is the case for $\theta$ small enough, generalising \cite[Lemma 4.1]{Bir62}:
\begin{lem}\label{lem:disjoint}
Suppose $\theta$ is such that $d>2r(d-1)\theta+(2r-1)\log_P(\widetilde{C})$. Then $\mathfrak{M}(\theta)$ is given as a disjoint union of $\mathfrak{M}_{\vec{a},q}(\theta)$ by \upshape(\ref{eq:majorarcs}). 
\end{lem}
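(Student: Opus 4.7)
The plan is a standard overlap argument for the Farey-like dissection defined by the major arcs. Suppose for contradiction that $\vec{\alpha}\in \mathfrak{M}_{\vec{a},q}(\theta)\cap \mathfrak{M}_{\vec{a}',q'}(\theta)$ for two distinct admissible pairs $(\vec{a},q)\neq (\vec{a}',q')$. First I would check that the map $(\vec{a},q)\mapsto \vec{a}/q\in\q^R$, restricted to pairs with $1\leq a_i\leq q$ and $(\vec{a},q)=1$, is injective. If $\vec{a}/q=\vec{a}'/q'$ coordinatewise, writing $g=\gcd(q,q')$ and $q=gm$, $q'=gm'$ with $\gcd(m,m')=1$ forces $m\mid a_i$ for every $i$; the coprimality $(\vec{a},q)=1$ then gives $m=1$, and symmetrically $m'=1$, so $(\vec{a},q)=(\vec{a}',q')$, contradicting our assumption. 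Hence there must be some coordinate $i$ with $a_i/q\neq a_i'/q'$, and therefore $|a_iq'-a_i'q|\geq 1$, i.e.
\begin{equation*}
\left|\frac{a_i}{q}-\frac{a_i'}{q'}\right|\geq \frac{1}{qq'}.
\end{equation*}

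Next I would apply the triangle inequality to the hypothesis that $\vec{\alpha}$ lies in both boxes in coordinate $i$, giving
\begin{equation*}
\frac{1}{qq'}\leq \left|\frac{a_i}{q}-\alpha_i\right|+\left|\alpha_i-\frac{a_i'}{q'}\right|\leq \frac{\widetilde{C}^{R-1}P^{-d+R(d-1)\theta}}{2q}+\frac{\widetilde{C}^{R-1}P^{-d+R(d-1)\theta}}{2q'}.
\end{equation*}
Clearing denominators and using the uniform bound $q,q'\leq \widetilde{C}^{R}P^{R(d-1)\theta}$ from the definition of the major arcs yields
\begin{equation*}
1\leq \tfrac{1}{2}\widetilde{C}^{R-1}P^{-d+R(d-1)\theta}(q+q')\leq \widetilde{C}^{2R-1}P^{-d+2R(d-1)\theta},
\end{equation*}
so that $P^{d}\leq \widetilde{C}^{2R-1}P^{2R(d-1)\theta}$, which after taking $\log_P$ contradicts the standing hypothesis $d>2R(d-1)\theta+(2R-1)\log_P(\widetilde{C})$.

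I expect no serious obstacle here; the only non-routine point is the injectivity lemma for coprime representations, and the rest is just a careful bookkeeping of the factors of $\widetilde{C}$ in the width and height bounds — these are exactly what force the exponent $(2R-1)$ rather than Birch's $(2R-1)$ with $\widetilde{C}=1$. In particular, the argument is entirely coordinatewise, so the vector-valued nature of $\vec{\alpha}$ adds nothing beyond having to locate a single coordinate on which the two centres differ.
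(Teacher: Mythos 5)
Your proof is correct and is precisely the standard overlap argument that Birch uses for his Lemma~4.1, adapted to carry the extra factors of $\widetilde{C}$ in the width of the arcs and the ceiling on $q$; the paper omits the proof of this lemma for exactly this reason. The injectivity check for $(\vec{a},q)\mapsto \vec{a}/q$ under $(\vec{a},q)=1$ and the coordinatewise triangle inequality are the right two ingredients, and your bookkeeping of the exponents $R-1$ (from the half-width) and $R$ (from the bound on $q$, $q'$) giving $2R-1$ matches the hypothesis of the lemma.
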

\begin{proof}
Suppose that $\alpha$ lies in the distinct sets $\mathfrak{M}_{\vec{b},q}(\theta)$ and $\mathfrak{M}_{\vec{b}',q'}(\theta)$.
It follows that there is an $i$ such that $\frac{b_i}{q}\neq \frac{b_i'}{q'}.$
Then
$$1\leq |b_i'q-q'b_i|\leq q|q'\alpha_i-b_i'|+q'|q\alpha_i-b_i|<\widetilde{C}^{2r-1}P^{-d+2r(d-1)\theta},$$
which contradicts our assumption on $\theta$. 
\end{proof}

Now, take major arcs $\mathfrak{M}(\theta_0)$, where $0<\theta_0<1$, $0<\delta<1$ and $\eta$ are such that
\begin{gather}
\eta=r(d-1)\theta_0, \label{eq:defeta}\\
d>2\eta+(2r-1)\log_P(\widetilde{C}) \label{eq:uptheta} ,\\
\frac{K}{r(d-1)}-(r+1)>\delta\eta^{-1}\label{eq:delta}.
\end{gather}
Observe that assumption (\ref{eq:delta}) is a quantitative version of our main assumption (\ref{eq:Birchass}). By (\ref{eq:uptheta}) the major arcs $\mathfrak{M}_{a,q}(\theta_0)$ are disjoint. Later, we choose $\eta$ and $\delta$ satisfying (\ref{eq:uptheta}) and (\ref{eq:delta}). From now on, write $\mathfrak{M}_{\vec{a},q}$ for $\mathfrak{M}_{\vec{a},q}(\theta_0)$.

We use Birch's idea of a sliding scale to bound $S(\vec{\alpha},\vec{\nu})$ on the minor arcs. Note that the estimate
$$S(\vec{\alpha}) \ll P^{n-K\theta+\epsilon}$$
for $\alpha \in \mathfrak{m}$ is stronger the larger $\theta$ is. Therefore, in order to show that $\int_{\mathfrak{m}} |S(\vec{\alpha},\vec{\nu})| \dd \vec{\alpha}$ is negligible, we let $\theta$ depend on $\vec{\alpha}$. For most $\vec{\alpha}$, we take $\theta$ large and have a strong estimate for $|S(\vec{\alpha})|$. When this estimate is invalid, we have to use a smaller value of $\theta$, but we have the compensation that this only happens for a set of $\alpha$ of small measure by the previous lemma. So, the worse the estimate for $|S(\vec{\alpha})|$, the smaller the set of $\vec{\alpha}$ for which it is necessary to use this estimate. Hence, we find the following generalisation of \cite[Lemma 4.4]{Bir62}: 

\begin{lem}\label{lem:minor}
$$\int_{\mathfrak{m}} |S(\vec{\alpha},\vec{\nu})| \dd \vec{\alpha} = O(\widetilde{C}^{r^2}P^{n-rd-\delta}),$$
where $O$ does not depend on $C$ or $\widetilde{C}$.
\end{lem}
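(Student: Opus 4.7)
The plan is Birch's sliding scale trick: cover the minor arcs by a short chain of ``shells'' between nested major-arc systems $\mathfrak{M}(\theta_{i-1}) \subset \mathfrak{M}(\theta_i)$ and, on each shell, balance the pointwise bound from \cref{lem:3cases} against the volume bound from \cref{lem:vol}. Concretely, fix a small $\epsilon_0 > 0$ and set $\theta_i = (1+\epsilon_0)^i \theta_0$ for $i = 0, 1, \ldots, T$, where $T$ is minimal with $\mathfrak{M}(\theta_T) \equiv [0,1)^R \pmod 1$. An application of Dirichlet's theorem on simultaneous approximation with $Q \asymp P^{d-1}$ shows that any $\theta_T$ slightly exceeding $1/(R(d-1))$ suffices, giving $T = O(\epsilon_0^{-1}\log P)$. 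Since $\mathfrak{M}(\cdot)$ is monotone in $\theta$, we get the disjoint decomposition
\begin{equation*}
\mathfrak{m} \subseteq \bigsqcup_{i=1}^T \bigl(\mathfrak{M}(\theta_i) \setminus \mathfrak{M}(\theta_{i-1})\bigr) \pmod{1}.
\end{equation*}

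For $\vec{\alpha} \in \mathfrak{M}(\theta_i)\setminus \mathfrak{M}(\theta_{i-1})$, option~(ii) of \cref{lem:3cases} at $\theta = \theta_{i-1}$ must fail (up to an inessential constant arising from the discrepancy between the $2q$ denominator in the definition of $\mathfrak{M}_{\vec{a},q}(\theta)$ and the $q$ in \cref{lem:3cases}(ii)), so option~(i) gives $|S(\vec{\alpha})| \ll P^{n - K\theta_{i-1} + \epsilon}$. Since $|S(\vec{\alpha},\vec{\nu})| = |S(\vec{\alpha})|$, combining this with $\operatorname{vol}(\mathfrak{M}(\theta_i)) \ll \widetilde{C}^{R^2} P^{-Rd + R(R+1)(d-1)\theta_i}$ from \cref{lem:vol} bounds the integral over the $i$-th shell by
\begin{equation*}
\widetilde{C}^{R^2} P^{n - Rd + \theta_{i-1}[R(R+1)(d-1)(1+\epsilon_0) - K] + \epsilon}.
\end{equation*}

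Finally, hypothesis (\ref{eq:delta}), rewritten using $\eta = R(d-1)\theta_0$, says exactly $\theta_0 [K - R(R+1)(d-1)] > \delta$. Choosing $\epsilon_0$ small enough (in terms of $\delta, K, R, d$) so that $\theta_0[K - R(R+1)(d-1)(1+\epsilon_0)] > \delta + 3\epsilon$, and using $\theta_{i-1} \geq \theta_0$, each shell contributes at most $\widetilde{C}^{R^2} P^{n - Rd - \delta - 2\epsilon}$. Summing over the $T \ll \log P$ shells is absorbed into a factor $P^\epsilon$, yielding the claimed bound. The only subtlety is the calibration of $\epsilon_0$: it has to be small enough to preserve the strict inequality from (\ref{eq:delta}) after the multiplicative slack $(1+\epsilon_0)$, but this is essentially free since the price paid is only $T = O(\epsilon_0^{-1}\log P)$ summands, and the ``$2q$ versus $q$'' mismatch in the definitions inflates arcs by at most a bounded constant factor equivalent to a harmless shift of $\epsilon_0$.
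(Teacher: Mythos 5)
Your proof is essentially the paper's proof: it is Birch's sliding-scale argument, trading off the pointwise bound from \cref{lem:3cases} against the volume bound from \cref{lem:vol} over a chain of nested major-arc systems, with assumption (\ref{eq:delta}) supplying the positive saving per shell. The two genuine differences are cosmetic. You use a geometric sequence $\theta_i=(1+\epsilon_0)^i\theta_0$ where the paper uses an arithmetic one with step $<\epsilon\delta/(R(R+1)(d-1))$; both control the loss $\theta_{i}-\theta_{i-1}$ in the same way. And you terminate the chain by making $\mathfrak{M}(\theta_T)$ cover all of $[0,1)^R$ via Dirichlet, whereas the paper stops at a fixed $\theta_T=\tfrac{2d}{(R+1)(d-1)}$ and bounds the integral over $\vec{\alpha}\notin\mathfrak{M}(\theta_T)$ separately by $P^{n-2Rd}$ using \cref{lem:3cases}\upshape(i) and (\ref{eq:Birchass}). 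Both endpoints work (in fact with the paper's $\theta_T$ Dirichlet already gives full coverage, so the paper's exterior term is vacuously small).

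Two numerical slips, neither fatal. First, calibrating Dirichlet with $Q\asymp P^{d-1}$ does not produce coverage, and neither does $\theta_T$ ``slightly exceeding $1/(R(d-1))$'': putting $Q=\widetilde{C}P^{(d-1)\theta}$ so that $q\le Q^R$ matches the denominator constraint in $\mathfrak{M}(\theta)$, you find coverage once $1/Q\le \widetilde{C}^{R-1}P^{-d+R(d-1)\theta}/2$, i.e. roughly $\theta\ge d/((R+1)(d-1))$, which is strictly larger than $1/(R(d-1))$ for $R\ge 1$, $d\ge 2$. Second, since $\theta_T$ and $\theta_0$ are both constants independent of $P$, the number of shells in your geometric chain is $T\ll \epsilon_0^{-1}\log(\theta_T/\theta_0)=O(1)$, not $O(\log P)$; the paper's stated $T\ll P^{\delta\epsilon}$ is likewise only a crude upper bound on a quantity that is actually constant. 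Neither slip affects the conclusion, since a factor of $T$ is harmless in either case, and you can simply take any fixed $\theta_T\ge 2d/((R+1)(d-1))$.

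You correctly noticed and dealt with the factor-of-$2$ mismatch between the half-width $\widetilde{C}^{R-1}P^{-d+R(d-1)\theta}/(2q)$ in the definition of $\mathfrak{M}_{\vec{a},q}(\theta)$ and the inequality $|q\vec{\alpha}-\vec{a}|\le\widetilde{C}^{R-1}P^{-d+R(d-1)\theta}$ in \cref{lem:3cases}\upshape(ii); the paper glosses over this by asserting they coincide modulo $1$, but as you observe the discrepancy amounts to a shift in $\theta$ of size $O(\log_P 2)$ which is absorbed into the $P^\epsilon$ losses.
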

\begin{proof} First, observe that $|S(\vec{\alpha},\vec{\nu})|=|S(\vec{\alpha})|.$ Let $\epsilon>0$ be small. Now, define a sequence 
\begin{align}\label{eq:thetaineq}
\theta_T>\theta_{T-1}>\ldots>\theta_1>\theta=\theta_0>0
\end{align}
such that 
\begin{align}
(r+1)(d-1)\theta_T&=2d,\\
\label{eq:diftheta} r(r+1)(d-1)(\theta_{t+1}-\theta_t)&<\delta\epsilon \quad \quad\text{for } 0\leq t\leq T-1.
\end{align}
This can be done with $T\ll P^{\delta\epsilon}$ (independent of $\widetilde{C}$). 

By \cref{lem:3cases} 
and as $-K\theta_T+\epsilon < -2rd$ by (\ref{eq:Birchass}), we find
$$\int_{\vec{\alpha} \not \in \mathfrak{M}(\theta_T)} |S(\vec{\alpha},\vec{\nu})| \dd \vec{\alpha} \ll P^{n-2rd}.$$
By \cref{lem:vol} and \cref{lem:3cases}
\begin{align*}
\int_{\mathfrak{M}(\theta_{t+1})-\mathfrak{M}(\theta_t)} |S(\vec{\alpha},\vec{\nu})| \dd \vec{\alpha}  &\leq
\int_{\mathfrak{M}(\theta_{t+1})} |S(\vec{\alpha},\vec{\nu})| \dd \vec{\alpha} \\ &\ll \widetilde{C}^{r^2}P^{-rd+r(r+1)(d-1)\theta_{t+1}}P^{n-K\theta_{t}+\epsilon}.
\end{align*}
By (\ref{eq:diftheta}), (\ref{eq:delta}), (\ref{eq:defeta}) and (\ref{eq:thetaineq}) we have
\[r(r+1)(d-1)\theta_{t+1}-K\theta_{t}+\epsilon< -\delta-\delta\epsilon,\]
so that
\begin{align*}
\int_{\mathfrak{M}(\theta_{t+1})-\mathfrak{M}(\theta_t)} |S(\vec{\alpha},\vec{\nu})| \dd \vec{\alpha}
&\ll \widetilde{C}^{r^2}P^{n-rd-\delta-\delta\epsilon}.  
\end{align*}
Therefore, 
\begin{align*} 
& \int_{\alpha\not\in\mathfrak{M}(\theta_{0})} |S(\vec{\alpha},\vec{\nu})| \dd \vec{\alpha} \\ 
=&\int_{\alpha\not\in\mathfrak{M}(\theta_{T})} |S(\vec{\alpha},\vec{\nu})| \dd \vec{\alpha} +\sum_{t=0}^{T-1}\int_{\mathfrak{M}(\theta_{t+1})-\mathfrak{M}(\theta_t)}|S(\vec{\alpha},\vec{\nu})| \dd \vec{\alpha} \\
\ll & P^{n-2rd}+P^{\delta\epsilon}\widetilde{C}^{r^2}P^{n-rd-\delta-\delta\epsilon}.  \qedhere
\end{align*} 
\end{proof}

As for our choice of $\delta$ and $\theta_0$  the major arcs are disjoint, we obtain the following generalisation of \cite[Lemma 4.5]{Bir62}.
\begin{cor}[]\label{cor:minor}
$$M(P,\vec{\nu})=\sum_{(\vec{a},q)\in \mathcal{M}(\theta_0)}\int_{\mathfrak{M}_{\vec{a},q}} S(\vec{\alpha},\vec{\nu}) \dd \alpha + O(\widetilde{C}^{r^2}P^{n-rd-\delta}).$$
\end{cor}

\subsection{Approximating exponential sums by integrals}
Given $\vec{\alpha} \in \mathfrak{M}_{\vec{a},q}$, we let $\vec{\beta}=\vec{\alpha}-\vec{a}/q$. 
Similarly, given $\vec{x}\in P\mathcal{B}\cap \z^n$, we let $\vec{z}=\vec{x}-q\vec{y}$ for $\vec{y}\in\z^n$ such that $0\leq z_i<q$ for all $i$. Then, we find that
\begin{align*}
S(\vec{\alpha},\vec{\nu}) &=\sum_{\vec{z}  (q)} \sum_{\vec{z}+q\vec{y}\in P\mathcal{B} } e(\vec{\alpha}\cdot(\vec{f}(\vec{z}+q\vec{y})-\vec{\nu})) \\
&= \sum_{\vec{z}  (q)}  e_q(\vec{a}\cdot(\vec{f}(\vec{z})-\vec{\nu})) \sum_{\vec{z}+q\vec{y}\in P\mathcal{B} } e(\vec{\beta}\cdot(\vec{f}(\vec{z}+q\vec{y})-\vec{\nu})). \label{eq:presssi}
\end{align*}
The following lemma replaces the sum $$\sum_{\vec{z}+q\vec{y}\in P\mathcal{B} } e(\vec{\beta}\cdot\vec{f}(\vec{z}+q\vec{y}))$$ by an integral.  
For a measurable subset $\mathcal{C}$ of $\mathcal{E}$ and $\vec{\gamma}\in \r^r$, we write
\begin{align}\label{eq:defI} I(\mathcal{C},\vec{\gamma})=\int_{\vec{\zeta}\in \mathcal{C}} e(\vec{\gamma} \cdot \vec{\tilde{f}}(\vec{\zeta})) \dd \vec{\zeta}.\end{align}

\begin{lem}\label{lem:sumtoint} Given $\vec{z}\in \z^n, \vec{\beta}\in \z^r$ and $q\in \z_{>0}$, we have
\begin{align}\label{eq:sumtoint}\sum_{\vec{z}+q\vec{y}\in P\mathcal{B} } e(\vec{\beta}\cdot\vec{f}(\vec{z}+q\vec{y}))
=I(\mathcal{B},P^d\vec{\beta})\frac{P^n}{q^n} +O\left(\left(C|P^d\vec{\beta}|+1\right)\frac{P^{n-1}}{q^{n-1}}\right).\end{align}
\end{lem}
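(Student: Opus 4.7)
The plan is to run two independent approximations---first replacing $\vec f$ by its top-degree part $\vec{\tilde f}$ inside the exponential, and then comparing the resulting lattice sum with the corresponding Riemann integral---and then to identify that integral with $q^{-n}P^n\,I(\mathcal B,P^d\vec\beta)$ via homogeneity of $\vec{\tilde f}$.

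For the first approximation, $\vec f-\vec{\tilde f}$ is a system of polynomials of degree $\leq d-1$ with coefficients bounded by $C$, so $|\vec\beta\cdot(\vec f-\vec{\tilde f})(\vec x)|\ll C|\vec\beta|P^{d-1}$ for $\vec x\in P\mathcal B$. Combining $|e(u)-e(v)|\leq 2\pi|u-v|$ with the elementary bound $\ll q^{-n}P^n$ for the number of lattice points $\vec z+q\vec y\in P\mathcal B$, this replacement costs at most
$$\ll C|\vec\beta|\,q^{-n}P^{n+d-1}=C|P^d\vec\beta|\,q^{-n}P^{n-1},$$
which is already absorbed (with a spare factor of $q$) by the advertised error term.

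For the second approximation I would tile the region $\{\vec\omega:\vec z+q\vec\omega\in P\mathcal B\}$ by unit cubes centred at the lattice points $\vec y\in\z^n$, treating interior and boundary cubes separately. For each interior cube the fundamental theorem of calculus gives
$$\left|e(\vec\beta\cdot\vec{\tilde f}(\vec z+q\vec y))-\int_{\vec y+[0,1]^n}e(\vec\beta\cdot\vec{\tilde f}(\vec z+q\vec\omega))\dd\vec\omega\right|\ll\sup_{\vec\omega}\left|\nabla_{\vec\omega}\,e(\vec\beta\cdot\vec{\tilde f}(\vec z+q\vec\omega))\right|.$$
Differentiating through the exponential and using $|\nabla\vec{\tilde f}(\vec x)|\ll\widetilde CP^{d-1}\leq CP^{d-1}$ on $P\mathcal B$ bounds this supremum by $\ll qC|\vec\beta|P^{d-1}$; summing over the $\ll q^{-n}P^n$ interior cubes contributes $\ll C|P^d\vec\beta|\,q^{1-n}P^{n-1}$. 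The cubes that meet the boundary of the integration region number $\ll q^{-(n-1)}P^{n-1}$, and each contributes $O(1)$ to the discrepancy, yielding the additive $O(q^{1-n}P^{n-1})$ term. Finally, homogeneity of degree $d$ together with the substitution $\vec\zeta=(\vec z+q\vec\omega)/P$ (Jacobian $(q/P)^n$) identifies
$$\int_{\vec z+q\vec\omega\in P\mathcal B}e(\vec\beta\cdot\vec{\tilde f}(\vec z+q\vec\omega))\dd\vec\omega=q^{-n}P^n\int_{\mathcal B}e(P^d\vec\beta\cdot\vec{\tilde f}(\vec\zeta))\dd\vec\zeta=q^{-n}P^n\,I(\mathcal B,P^d\vec\beta).$$

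The only genuine obstacle is the bookkeeping that separates interior from boundary cubes: this is what produces the additive ``$+1$'' inside the parentheses of the error term, while the gradient estimate on interior cubes produces the $C|P^d\vec\beta|$ factor. Once that is organised, the two approximations combine immediately to yield \eqref{eq:sumtoint}.
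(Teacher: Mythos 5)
Your argument is correct and follows essentially the same route as the paper's proof: first swap $\vec f$ for $\vec{\tilde f}$ at a cost controlled by $C|\vec\beta|P^{d-1}$ per term, then compare the lattice sum to the integral with an interior gradient estimate plus a boundary count, and finally rescale by $\vec z+q\vec\omega=P\vec\zeta$. The bookkeeping (interior cubes give the $C|P^d\vec\beta|$ factor, boundary cubes give the $+1$) matches the paper exactly, so nothing further is needed.
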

\begin{proof}
For the system of polynomials $\vec{r}=\vec{f}-\vec{\tilde{f}}$ of degree at most $d-1$ we have
$$|e(\vec{\beta}\cdot \vec{r}(\vec{z}+q\vec{y}))-1|\ll |\vec{\beta}||\vec{r}(\vec{z}+q\vec{y}))|\ll |\vec{\beta}| CP^{d-1},$$
where we assumed that $\vec{z}+q\vec{y}\in P\mathcal{B}$. There are $O((P/q)^n)$ values of $\vec{y}$ in the sum, hence
$$\sum_{\vec{z}+q\vec{y}\in P\mathcal{B} } e(\vec{\beta}\cdot\vec{f}(\vec{z}+q\vec{y}))=\sum_{\vec{z}+q\vec{y}\in P\mathcal{B} } e(\vec{\beta}\cdot\vec{\tilde{f}}(\vec{z}+q\vec{y}))+O\left(|\vec{\beta}|C\frac{P^{n+d-1}}{q^n}\right).$$

Next, we replace the sum in the right-hand side by the integral 
\begin{align}\label{eq:replint}\displaystyle\int_{\vec{z}+q\vec{\omega}\in P\mathcal{B}}  e(\vec{\beta}\cdot\vec{\tilde{f}}(\vec{z}+q\vec{\omega})) \dd \vec{\omega}.\end{align}
 The edges of the cube of summation and integration have length $P/q$. 
In the replacement of the sum by the integral, we have an error of at most $\ll (P/q)^{n-1}$ coming from the boundaries. The variation in $e(\vec{\beta}\cdot\vec{\tilde{f}}(\vec{z}+q\vec{y}))$ results in an error of at most
$O(|\vec{\beta}|q\widetilde{C}P^{d-1}(P/q)^n).$
Hence, the total error in (\ref{eq:sumtoint}) is 
$$\ll |\vec{\beta}| C\frac{P^{n+d-1}}{q^n}+|\vec{\beta}|\widetilde{C}\frac{P^{n+d-1}}{q^{n-1}}+\frac{P^{n-1}}{q^{n-1}} \ll \left(C|P^d\vec{\beta}|+1\right)\frac{P^{n-1}}{q^{n-1}}.$$ 
Applying the substitution $\vec{z}+q\vec{\omega}=P\vec{\zeta}$ to (\ref{eq:replint}) gives the desired result.
\end{proof}

\begin{cor}\label{cor:sumtoint}Given $\vec{z}\in \z^n$ and $\vec{\alpha} \in \mathfrak{M}_{\vec{a},q}$ with $\vec{\beta}=\vec{\alpha}-\vec{a}/q$, we have
\begin{align}\label{eq:sumtoint2}\sum_{\vec{z}+q\vec{y}\in P\mathcal{B} } e(\vec{\beta}\cdot\vec{f}(\vec{z}+q\vec{y}))
= I(\mathcal{B},P^d\vec{\beta})\frac{P^n}{q^n}+O\left(C\widetilde{C}^{r-1}\frac{P^{n+\eta-1}}{q^n}\right).\end{align}
\end{cor}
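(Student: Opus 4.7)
The plan is to specialize \cref{lem:sumtoint} using the size constraint on $\vec{\beta}$ coming from the definition of the major arc $\mathfrak{M}_{\vec{a},q}(\theta_0)$. By construction, every coordinate of $\vec{\beta} = \vec{\alpha} - \vec{a}/q$ satisfies
\[
|\beta_i| \leq \frac{\widetilde{C}^{R-1}P^{-d+R(d-1)\theta_0}}{2q} = \frac{\widetilde{C}^{R-1}P^{-d+\eta}}{2q},
\]
where in the last step I have used the relation $\eta = R(d-1)\theta_0$ from (\ref{eq:defeta}). Consequently $|P^{d}\vec{\beta}| \ll \widetilde{C}^{R-1}P^{\eta}/q$.

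Plugging this into the error estimate $O\bigl((C|P^d\vec{\beta}|+1)q^{1-n}P^{n-1}\bigr)$ of \cref{lem:sumtoint} yields an error of size
\[
\ll \bigl(C\widetilde{C}^{R-1}P^{\eta}q^{-1} + 1\bigr) q^{1-n} P^{n-1}
\ll C\widetilde{C}^{R-1}q^{-n}P^{n+\eta-1} + q^{1-n}P^{n-1},
\]
so it only remains to absorb the second summand into the first. This requires $q \ll C\widetilde{C}^{R-1}P^{\eta}$, which is immediate: on the major arc we have $q \leq \widetilde{C}^{R}P^{\eta}$, and since the coefficients of $\tilde{\vec{f}}$ form a subset of the coefficients of $\vec{f}$ we have $\widetilde{C} \leq C$, hence $\widetilde{C}^{R} \leq C\widetilde{C}^{R-1}$.

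There is no real obstacle here; the entire content of the corollary is to record the shape of the error term in terms of $\eta$ once the trivial major-arc bound on $\vec{\beta}$ is inserted. The main term $q^{-n}P^{n}I(\mathcal{B},P^{d}\vec{\beta})$ is transcribed verbatim from \cref{lem:sumtoint}.
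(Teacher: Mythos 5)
Your proof is correct and follows exactly the same route as the paper's one-line argument: substitute the major-arc bounds $q \le \widetilde{C}^{R}P^{\eta}$ and $|\vec{\beta}| \le \widetilde{C}^{R-1}q^{-1}P^{-d+\eta}$ into the error term of \cref{lem:sumtoint}. Your additional observation that $\widetilde{C}\le C$ (so the $q^{1-n}P^{n-1}$ term is absorbed) is a detail the paper leaves implicit, and it is handled correctly.
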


\begin{proof}
Estimate the error term in \cref{lem:sumtoint} by observing that for $\vec{\alpha}\in \mathfrak{M}_{\vec{a},q}$, it holds that $|P^d\vec{\beta}|\leq \widetilde{C}^{r-1}q^{-1}P^{\eta}$ and $1\leq \widetilde{C}^rq^{-1}P^{\eta}$.
\end{proof}

Recall $S_{\vec{a},q}(\vec{\nu})$ is defined by (\ref{def:Saqdiscrete}). Applying the corollary to (\ref{eq:presssi}) we obtain (compare with \cite[Lemma 5.1]{Bir62}):
\begin{cor}\label{cor:5.1}
Let $\vec{\alpha}=\vec{a}/q+\vec{\beta}\in \mathfrak{M}_{\vec{a},q}$. Then, 
$$S(\vec{\alpha},\vec{\nu})=q^{-n}S_{\vec{a},q}(\vec{\nu})\cdot I(\mathcal{B};P^d\vec{\beta})\cdot e(-\vec{\beta}\cdot\vec{\nu})\cdot{P^n} + O(C\widetilde{C}^{r-1}P^{n+\eta-1}).$$
\end{cor}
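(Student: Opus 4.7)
The plan is to combine the decomposition of $S(\vec{\alpha},\vec{\nu})$ over residue classes modulo $q$ (already carried out in the paragraph preceding Lemma \ref{lem:sumtoint}) with the integral approximation of Corollary \ref{cor:sumtoint}. Writing $\vec{\alpha}=\vec{a}/q+\vec{\beta}$ and $\vec{x}=\vec{z}+q\vec{y}$, and using that $\vec{f}(\vec{z}+q\vec{y})\equiv \vec{f}(\vec{z})\pmod{q}$, I factor out $e(-\vec{\beta}\cdot\vec{\nu})$ (which has unit modulus) to obtain
\[
S(\vec{\alpha},\vec{\nu})=e(-\vec{\beta}\cdot\vec{\nu})\sum_{\vec{z}\,\Mod q}e\!\left(\tfrac{\vec{a}\cdot(\vec{f}(\vec{z})-\vec{\nu})}{q}\right)\sum_{\vec{z}+q\vec{y}\in P\mathcal{B}\cap\z^n}e(\vec{\beta}\cdot\vec{f}(\vec{z}+q\vec{y})),
\]
in which crucially the first exponential no longer depends on $\vec{y}$.

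Next I apply Corollary \ref{cor:sumtoint} to the inner sum (its hypotheses hold since $\vec{\alpha}\in\mathfrak{M}_{\vec{a},q}$), replacing it by $q^{-n}P^n I(\mathcal{B};P^d\vec{\beta})+O(C\widetilde{C}^{R-1}q^{-n}P^{n+\eta-1})$. Recognising that
\[
\sum_{\vec{z}\,\Mod q}e\!\left(\tfrac{\vec{a}\cdot(\vec{f}(\vec{z})-\vec{\nu})}{q}\right)=S_{\vec{a},q}\,e(-\vec{a}\cdot\vec{\nu}/q)=S_{\vec{a},q}(\vec{\nu}),
\]
the main term assembles to exactly $P^n q^{-n}S_{\vec{a},q}(\vec{\nu})\cdot I(\mathcal{B};P^d\vec{\beta})\cdot e(-\vec{\beta}\cdot\vec{\nu})$, as claimed.

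It remains to control the contribution from the error term after summing over $\vec{z}$ modulo $q$. There are $q^n$ such residues, each weighted by a factor of unit modulus, so the total error is at most
\[
q^n\cdot O\!\left(C\widetilde{C}^{R-1}q^{-n}P^{n+\eta-1}\right)=O\!\left(C\widetilde{C}^{R-1}P^{n+\eta-1}\right),
\]
independent of $q$, which is the claimed error bound. The argument is essentially bookkeeping; the only point needing care is the cancellation of $q^n$ against $q^{-n}$ in the error contribution, which is what makes the estimate uniform in $q$ over the major arcs.
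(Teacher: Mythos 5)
Your argument is correct and is essentially the approach the paper intends: you decompose $S(\vec{\alpha},\vec{\nu})$ over residue classes modulo $q$ (as in the paragraph before Lemma \ref{lem:sumtoint}), pull out $e(-\vec{\beta}\cdot\vec{\nu})$, apply Corollary \ref{cor:sumtoint} to the inner sum, and observe that the $q^n$ residue classes cancel against the $q^{-n}$ in the error term. Nothing is missing; the bookkeeping is correct.
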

In the next two sections we study the singular series and singular integral which will be obtained by putting together $q^{-n} S_{\vec{a},q}(\vec{\nu})$ and $I(\mathcal{B};P^d\vec{\beta})$ respectively for all $\vec{\alpha}\in \mathfrak{M}$.

\subsection{Singular series}\label{sec:ss}
Define the \textit{singular series} by
\begin{equation*}\mathfrak{S}(\vec{\nu}) = \sum_{q=1}^\infty q^{-n} \sum_{\substack{\vec{a}  (q) \\ \gcd(\vec{a},q)=1}} S_{\vec{a},q}(\vec{\nu}).
\end{equation*}
The singular series converges absolutely under assumption (\ref{eq:delta}) on $K$. This is made quantitative in the following lemma, generalising \cite[p.256]{Bir62}:
\begin{lem}\label{lem:serconv} 
For all $\tau \geq 0$ we have
$$\sum_{q>P^{\tau\eta}} q^{-n} \sum_{\substack{\vec{a}  (q) \\ \gcd(\vec{a},q)=1}} |S_{\vec{a},q}(\vec{\nu})|\ll \widetilde{C}^{K/(d-1)} P^{-\tau\delta}.$$
\end{lem}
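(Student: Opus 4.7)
The plan is to reduce the double sum to a tail of a convergent Dirichlet-type series by plugging in the pointwise bound from \cref{lem:Saq} and then crudely counting the numerators $\vec{a}$. Since $|e(-\vec{a}\cdot\vec{\nu}/q)|=1$, we have $|S_{\vec{a},q}(\vec{\nu})|=|S_{\vec{a},q}|$, so the $\vec{\nu}$-dependence drops out and I can work with the unshifted exponential sum.

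By \cref{lem:Saq}, for every $\epsilon>0$,
\[
|S_{\vec{a},q}(\vec{\nu})| \;\ll\; \widetilde{C}^{K/(d-1)}\, q^{n - K/(R(d-1)) + \epsilon}.
\]
The number of tuples $\vec{a}\bmod q$ with $(\vec{a},q)=1$ is at most $q^R$, so
\[
\sum_{\substack{\vec{a}\Mod q \\ (\vec{a},q)=1}} q^{-n} |S_{\vec{a},q}(\vec{\nu})| \;\ll\; \widetilde{C}^{K/(d-1)}\, q^{R - K/(R(d-1)) + \epsilon}.
\]
Here the implied constant is independent of $C$ and $\widetilde{C}$, which is the whole point of isolating the $\widetilde{C}^{K/(d-1)}$ factor in \cref{lem:Saq}.

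Writing $s := \tfrac{K}{R(d-1)} - (R+1)$, assumption (\ref{eq:delta}) gives $s > \delta\eta^{-1} > 0$, so the exponent $R - K/(R(d-1)) + \epsilon = -1 - s + \epsilon$ is strictly less than $-1$ once $\epsilon$ is chosen small enough (say $\epsilon < s - \delta\eta^{-1}$). Then
\[
\sum_{q > P^{\tau\eta}} q^{-1 - s + \epsilon} \;\ll\; \bigl(P^{\tau\eta}\bigr)^{-s + \epsilon} \;=\; P^{-\tau\eta(s-\epsilon)}.
\]
Since $\eta(s-\epsilon) \geq \delta$ by our choice of $\epsilon$, combining with the $\widetilde{C}^{K/(d-1)}$ factor yields the claimed bound $\widetilde{C}^{K/(d-1)} P^{-\tau\delta}$.

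There is no real obstacle here; the only subtlety is ensuring that the strict inequality in (\ref{eq:delta}) leaves room to absorb the $\epsilon$ loss from \cref{lem:Saq}, which it does by the quantitative gap $s - \delta\eta^{-1} > 0$. The lemma then also implies absolute convergence of the singular series by taking $\tau=0$.
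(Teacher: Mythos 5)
Your proof is correct and follows essentially the same route as the paper's: apply \cref{lem:Saq} pointwise, bound the number of admissible $\vec{a}$ by $q^R$, use assumption (\ref{eq:delta}) to see that the resulting exponent is strictly less than $-1-\delta\eta^{-1}$ once $\epsilon$ is taken small enough, and then estimate the tail of the sum over $q$. The only difference is that you make the $\epsilon$-bookkeeping explicit, which the paper leaves implicit.
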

\begin{proof} Observe $|S_{\vec{a},q}(\vec{\nu})| = |S_{\vec{a},q}|$. By \cref{lem:Saq} and (\ref{eq:delta}), we have that 
\begin{align*}
\sum_{q>P^{\tau\eta}}q^{-n} \sum_{\substack{\vec{a}  (q) \\ \gcd(\vec{a},q)=1}}  |S_{\vec{a},q}(\vec{\nu})|
&\ll \sum_{q>P^{\tau\eta}}q^{-n} \sum_{\substack{\vec{a}  (q) \\ \gcd(\vec{a},q)=1}}  \widetilde{C}^{K/(d-1)}q^{n-K/r(d-1)+\epsilon} 
&\ll \widetilde{C}^{K/(d-1)}\sum_{q>P^{\tau\eta}}  q^{-1-\delta\eta^{-1}} \\
&\ll \widetilde{C}^{K/(d-1)} P^{-\tau\delta}. \qedhere 
\end{align*}
\end{proof}

\subsection{Singular integral}\label{sec:si}
For $T\in \r$, define a continuous function $J_T:\r^r\to \r$ by
$$J_T(\vec{\mu})=\int_{|\vec{\gamma}|\leq T} I(\mathcal{B},\vec{\gamma}) e(-\vec{\gamma} \cdot \vec{\mu}) \dd \vec{\gamma},$$
where $I(\mathcal{B};\vec{\gamma})$ is defined by \upshape{(\ref{eq:defI})}. The next two lemmata, generalising \cite[Lemma 5.2 and 5.3]{Bir62}, show that the sequence $(J_T)_{T\in\n}$ converges uniformly in $\vec{\mu}$ to a function $J(\vec{\mu})$ which we call the \textit{singular integral}.

\begin{lem}\label{lem:sch11}
For all $\epsilon>0$ one has
$$\left|I(\mathcal{B};\vec{\gamma})\right| \ll \min(1,(\widetilde{C}^{1-r}|\vec{\gamma}|)^{-r-1-\delta\eta^{-1}}(\widetilde{C}|\vec{\gamma}|)^\epsilon).$$
\end{lem}
\begin{proof}
$\left|I(\mathcal{B};\vec{\gamma})\right| \ll 1$ follows directly. 
Therefore, in proving the second part of the inequality we may assume that 
\begin{align}\label{eq:lem5.2as} \widetilde{C}^{1-r}|\gamma|>1. \end{align}
Take 
$P=\widetilde{C}|\vec{\gamma}|(\widetilde{C}^{1-r}|\vec{\gamma}|)^{K/r(d-1)}.$
By (\ref{eq:lem5.2as}) and $d\geq 2$ we find that $P>(\widetilde{C}|\vec{\gamma}|)^{2/d}$.
Hence, for $\vec{\alpha}=P^{-d}\vec{\gamma}$ we have that
$|\vec{\alpha}|<(\widetilde{C}P^d)^{-1/2}.$
Let $\phi$ be such that
$$|\vec{\alpha}| = \widetilde{C}^{r-1}P^{-d+r(d-1)\phi}.$$
Then, by \cref{lem:disjoint} we find that $\mathfrak{M}(\phi)$ is given as a disjoint union of $\mathfrak{M}_{\vec{a},q}(\phi)$. Observe that $\vec{\alpha}$ lies on the boundary of the open set $\mathfrak{M}_{\vec{0},1}(\phi)$. Hence, $\vec{\alpha}$ is not in $\mathfrak{M}(\phi)$. \Cref{lem:3cases} now implies that 
\begin{align}\label{eq:lem5.2-1}
|S(\vec{\alpha})|\ll P^{n+\epsilon}(\widetilde{C}^{1-r}P^d|\vec{\alpha}|)^{-K/r(d-1)}.
\end{align}
On the other hand, by \cref{lem:sumtoint} with $\vec{z}=\vec{0}, q=1, \vec{a}=\vec{0}$, we obtain
\begin{align}\label{eq:lem5.2-2}
S(\vec{\alpha}) = \sum_{\vec{y}\in P\mathcal{B} } e(\vec{\alpha}\cdot \vec{f}(\vec{y})) =  I(\mathcal{B},P^d\vec{\alpha})P^n + O\left(\left(\widetilde{C}|P^d\vec{\alpha}|+1\right)P^{n-1}\right).
\end{align}
Hence, combining (\ref{eq:lem5.2-1}) and (\ref{eq:lem5.2-2}) yields
$$\left| I(\mathcal{B},\vec{\gamma})\right| \ll (\widetilde{C}^{1-r}|\vec{\gamma}|)^{-K/r(d-1)}(\widetilde{C}|\vec{\gamma}|)^\epsilon.$$
Estimating $K/r(d-1)$ by $r+1+\delta\eta^{-1}$ using (\ref{eq:delta}) completes the proof.
\end{proof}

\begin{lem}\label{lem:Jerror} If $T_2>T_1$, for all $\epsilon>0$ one has
\begin{align}\label{eq:Jerror} |J_{T_1}(\vec{\mu})-J_{T_2}(\vec{\mu})|\ll \widetilde{C}^{r^2-1+(r-1)\delta\eta^{-1}+\epsilon} T_1^{-1-\delta\eta^{-1}+\epsilon}.\end{align}\end{lem}
\begin{proof}
Using \cref{lem:sch11} we find
\begin{align*}
J_{T_2}(\vec{\mu})-J_{T_1}(\vec{\mu}) 
&= \int_{T_1 \leq |\vec{\gamma}| \leq T_2} I(\mathcal{B},\vec{\gamma})e(-\vec{\gamma} \cdot \vec{\mu}) \dd\vec{\gamma} \\
&\ll \int_{T_1 \leq |\vec{\gamma}| \leq T_2} (\widetilde{C}^{1-r}|\vec{\gamma}|)^{-r-1-\delta\eta^{-1}}(\widetilde{C}|\vec{\gamma}|)^\epsilon \dd\vec{\gamma} \\
&\ll \int_{T_1}^{T_2} \widetilde{C}^{r^2-1+(r-1)\delta\eta^{-1}+\epsilon}\Gamma^{-r-1-\delta\eta^{-1}+\epsilon} \Gamma^{r-1}\dd\Gamma \\
&\ll \widetilde{C}^{r^2-1+(r-1)\delta\eta^{-1}+\epsilon} T_1^{-1-\delta\eta^{-1}+\epsilon}.\qedhere
\end{align*}
\end{proof}
Taking the limit $T_2\to \infty$ we obtain
\begin{equation}\label{eq:Jerror2} J_{T_1}(\vec{\mu})-J(\vec{\mu})\ll \widetilde{C}^{r^2-1+(r-1)\delta\eta^{-1}+\epsilon} T_1^{-1-\delta\eta^{-1}+\epsilon}.\end{equation}
This implies the following upper bound for $J(\vec{\mu})$:

\begin{cor}\label{cor:upJint} For all $\vec{\mu}\in \z^r$ and $\epsilon>0$ it holds that
$$J(\vec{\mu})\ll \widetilde{C}^{r(r-1)+\epsilon}.$$
\end{cor}
\begin{proof}
By the trivial bound in \cref{lem:sch11} we have $J_{\widetilde{C}^{r-1}}(\vec{\mu})
\ll \widetilde{C}^{r(r-1)}.$
By (\ref{eq:Jerror2}) we have
$ J(\vec{\mu})-J_{\widetilde{C}^{r-1}}(\vec{\mu}) \ll 
\widetilde{C}^{r(r-1)+\epsilon}.$ The result follows by the triangle inequality. 
\end{proof}

\subsection{Major arcs }
Combining the results in the previous sections, we obtain a quantitative asymptotic theorem for the number of integer zeros of $\vec{f}-\vec{v}$ in a box $P\mathcal{B}$, generalising \cite[Lemma 5.5]{Bir62}:
\begin{lem}\label{lem:ass1}
$$\frac{M(P,\vec{\nu})}{P^{n-rd}} =  \mathfrak{S}(\vec{\nu}) J(P^{-d}\vec{\nu})+\mathfrak{O}_1+\mathfrak{O}_2,$$
where
$$\mathfrak{O}_1= O\left(C\widetilde{C}^{2r^2+r-1}P^{-1+2(r+1)\eta}\right) \quad \text{and} \quad \mathfrak{O}_2=O\left(\widetilde{C}^{K/(d-1)+r^2-r+\epsilon}P^{-\delta}\right).$$
\end{lem}

\begin{proof}
By \cref{cor:minor} we have that
\[M(P,\vec{\nu}) = \sum_{(\vec{a},q)\in \mathcal{M}(\theta_0)}
 \displaystyle\int_{\mathfrak{M}_{\vec{a},q}} S(\vec{\alpha},\vec{\nu}) \dd \vec{\alpha}+O(\widetilde{C}^{r^2}P^{n-rd-\delta}).\]
Note that $K/(d-1)-r+\epsilon>0$. Let $\vec{\beta}=\vec{\alpha}-\vec{a}/q$. Then
 \[M(P,\vec{\nu})
=\sum_{(\vec{a},q)\in \mathcal{M}(\theta_0)}
 \displaystyle\int_{\mathfrak{M}_{\vec{0},q}} S(\vec{a}/q+\vec{\beta},\vec{\nu}) \dd \vec{\beta}+P^{n-rd}\mathfrak{O}_2. \]
As $|S_{\vec{a},q}(\vec{\nu})|\leq q^n$ and $|\mathcal{M}(\theta_0)|\leq (\widetilde{C}^r P^\eta)^{r+1}$, 
\[\sum_{(\vec{a},q)\in \mathcal{M}(\theta_0)} \frac{|S_{\vec{a},q}(\vec{\nu})|}{q^n} C\widetilde{C}^{r-1}P^{n+\eta-1}\int_{\mathfrak{M}_{\vec{0},q}}\dd\vec{\beta} = 
 P^{n-rd}\mathfrak{O}_1.  \]
Hence, \cref{cor:5.1} implies that
\begin{align}\label{eq:prop214-1}
\frac{M(P,\vec{\nu})}{P^{n-rd}}
=P^{rd}\sum_{(\vec{a},q)\in \mathcal{M}(\theta_0)} & \frac{ S_{\vec{a},q}(\vec{\nu})}{q^n} J_{\widetilde{C}^{r-1}P^{\eta}}(P^{-d}\vec{\nu}) + \mathfrak{O}_1+\mathfrak{O}_2.
\end{align}
By \cref{lem:serconv} for $\tau=0$  we find
\begin{align*}
\sum_{(\vec{a},q)\in \mathcal{M}(\theta_0)}  & \frac{|S_{\vec{a},q}(\vec{\nu})|}{q^n}\widetilde{C}^{r^2-1+(r-1)\delta\eta^{-1}+\epsilon}(\widetilde{C}^{r-1}P^\eta)^{-1-\delta\eta^{-1}+\epsilon}=\mathfrak{O}_2.
\end{align*}
Hence, using (\ref{eq:Jerror2}) for $T_1=\widetilde{C}^{r-1}P^{\eta}$ to rewrite (\ref{eq:prop214-1}) we obtain
\begin{align*}
\frac{M(P,\vec{\nu})}{P^{n-rd}}
= \sum_{(\vec{a},q)\in \mathcal{M}(\theta_0)} & \frac{ S_{\vec{a},q}(\vec{\nu})}{q^n} J(P^{-d}\vec{\nu}) +\mathfrak{O}_1+\mathfrak{O}_2.
\end{align*}
By \cref{lem:serconv} for $\tau=1$ and \cref{cor:upJint} we can plug in the singular series and obtain	
\begin{align*}  
\frac{M(P,\vec{\nu})}{P^{n-rd}}
&= \left(\mathfrak{S}(\vec{\nu})+O(\widetilde{C}^{K/(d-1)}P^{-\delta})\right) J(P^{-d}\vec{\nu})+\mathfrak{O}_1+\mathfrak{O}_2\\
&= \mathfrak{S}(\vec{\nu})J(P^{-d}\vec{\nu})+\mathfrak{O}_1+\mathfrak{O}_2. \qedhere
\end{align*}
\end{proof}

\begin{thm}\label{thm:ass}
\begin{align*}
M(P,\vec{\nu})=P^{n-rd}\mathfrak{S}(\vec{\nu})J(P^{-d}\vec{\nu})+O(C\widetilde{C}^{K/(d-1)+r^2-1}P^{n-rd-\delta}),
\end{align*}
where 
\begin{equation}\label{eq:defdelta}\delta < \frac{K-r(r+1)(d-1)}{K+r(r+1)(d-1)}.\end{equation}
\end{thm}
\begin{proof}
Let $\delta$ as in (\ref{eq:defdelta}) and let
$$\eta=\frac{r(d-1)}{K+r(r+1)(d-1)}.$$
As $\eta<\frac{1}{r+1}$, equation (\ref{eq:uptheta}) is satisfied for $P$ large enough (e.g. $P>\widetilde{C}^{\frac{4r}{d}}$). Moreover, (\ref{eq:delta}) is satisfied. As $-1+2(r+1)\eta<-\delta$ and $r^2+r<K/(d-1)$, both error terms $\mathfrak{O}_1$ and $\mathfrak{O}_2$ in \cref{lem:ass1} are $O(C\widetilde{C}^{K/(d-1)+r^2-1}P^{-\delta})$.  The statement now follows from that result.
\end{proof}

\section{Quantitative strong approximation}\label{chap:qsa}

\subsection{The Nullstellensatz for non-singular varieties}\label{p:null} 
From now on assume that $V$ and $\widetilde{V}$ are non-singular over $\overline{\q}$ as affine and projective varieties respectively (see \cref{sec:mr} for both our main assumption (\ref{eq:Birchass})\footnote{In fact, for the lower bounds for the singular series and singular integral it suffices to assume $n\geq r$, but we need (\ref{eq:Birchass}) again in \cref{sec:strongapprox}.} and the definition of $V$ and $\widetilde{V}$). Then, if $\vec{f}$ has a zero modulo a prime power, we can invoke Hensel's lemma to find more zeros modulo higher powers of the same prime. The real analogue of this statement is the implicit function theorem around a zero of $\vec{\tilde{f}}$. These ideas can be used to deduce known results on the non-vanishing of the singular series and integral provided the existence of local zeros. In \cref{prop:s0} and \cref{cor:J0} we prove this in a quantitative manner. In order to do so, we need to control the minors of Jacobian matrix of $\vec{f}$ and $\vec{\tilde{f}}$, for which we use a quantitative version of the Nullstellensatz, as explained below. 

\label{def:delta} Let $I$ be a subset of $[n]:=\{1,\ldots, n\}$ of size $r$ and let $\Delta_I(\vec{x})$ be the $r\times r$ minor of the Jacobian matrix of $\vec{f}$ (of dimensions $r\times n$) with columns given by the elements of $I$. 
Similarly, let $\tilde{\Delta}_I(\vec{x})$ be the $r\times r$ minor of the Jacobian matrix of $\vec{\tilde{f}}$  with columns given by the elements of $I$. 

Consider the polynomials $\vec{f}$ and all $r\times r$ minors $\Delta_I$. As $V$ is non-singular, these polynomials have no common zero over $\overline{\q}$. Hence, by the Nullstellensatz, the ideal generated by these polynomials equals $\overline{\q}[\vec{x}]$. This is made quantitative in \cite[Theorem 0.2]{AKS12} (we take $V=\mathbb{A}^n(\overline{\q}), g=1$ and $s=r+\binom{n}{r}$ in this result): there exists an $N\in \z_{>0}$ and polynomials $g_1,\ldots, g_r$ and $g_I$ in $\z[\vec{x}]$ for all $I\subset [n]$ with $|I|=r$ such that
\begin{align}\label{eq:defN}\sum_{i=1}^r f_i(\vec{x}) g_i(\vec{x}) + \sum_I \Delta_I(\vec{x}) g_I(\vec{x}) =N, \end{align}
satisfying the estimate
$\log(N)\ll 2(n+1)r^n(d-1)^{n-1}d\log(C^r).$
Hence,
\begin{align}\label{eq:defc}N\ll C^{2(n+1)r^{n+1}(d-1)^{n-1}d}=\mathfrak{C},\end{align}
where the above equation defines $\mathfrak{C}$. 

For the projective variety $\widetilde{V}$ we have a similar reasoning for every affine patch of $\widetilde{V}$ obtained by setting one of the coordinates $x_j$ equal to $1$. Let $1\leq j \leq n$ be given. Because $\widetilde{V}$ is non-singular over $\overline{\q}$, we find $\tilde{N}_j\in \z_{>0}$ and polynomials $\tilde{g}_{1,j},\ldots, \tilde{g}_{r,j}$ and $\tilde{g}_{I,j}$ in $\z[\vec{x}]$ for all $I\subset [n]$ with $|I|=r$ such that
\begin{align}\label{eq:defNtilde}\sum_{i=1}^r \tilde{f}_i(\vec{x}) \tilde{g}_{i,j}(\vec{x}) + \sum_I \tilde{\Delta}_I(\vec{x}) \tilde{g}_{I,j}(\vec{x}) =\tilde{N}_j \end{align}
for all $\vec{x}$ with $x_j=1$.  
Let $\|g\|_{\infty}$ denote the height of a polynomial $g$, that is $\|g\|_{\infty}$ is the maximum of the absolute values of the coefficients of $g$. Then, by the same result \cite[Theorem 0.2]{AKS12} we have
\begin{align*}
\log\|\tilde{g}_{I,j}\|_{\infty} &\ll 2nr^{n-1}(d-1)^{n-2}d\log(\widetilde{C}^r)
\end{align*}
for all $I\subset[n]$ with $|I|=r$. Hence,
\begin{align}\label{eq:defctilde}\|\tilde{g}_{I,j}\|_{\infty} \ll \widetilde{C}^{2nr^{n}(d-1)^{n-2}d}=\tilde{\mathfrak{C}},\end{align}
where the above equation defines $\tilde{\mathfrak{C}}$. Let $\tilde{N}=\min_{j=1}^n \tilde{N}_j$.

\subsection{Lower bound for the singular series }\label{sec:lbss}
As usual, for each prime $p$ define the \textit{local density} at $p$ to be
\begin{equation}\label{eq:defld}\sigma_p(\vec{\nu})=\lim_{m\to \infty}\frac{\#\{\vec{x} \Mod p^m \mid \vec{f}(\vec{x})\equiv \vec{\nu} \Mod p^m\}}{p^{m(n-r)}}.\end{equation}
Observe that
\[p^{m(n-r)}\sum_{k=0}^m \sum_{\substack{\vec{a} (p^k) \\ \gcd(\vec{a},p)=1}} p^{-kn} S_{\vec{a},p^k}(\vec{\nu})\]
is the number of points satisfying $\vec{f}(\vec{x})=\vec{\nu} \mod p^m$. So, equivalently we could have defined $\sigma_p(\vec{\nu})$ by 
$$\sigma_p(\vec{\nu})=\sum_{k=0}^\infty \sum_{\substack{\vec{a}  (p^k) \\ \gcd(\vec{a},p)=1}} p^{-kn} S_{\vec{a},p^k}(\vec{\nu}).$$
Then, by multiplicativity of $S_{\vec{a},q}$ we can factorize the singular series as a product over the local densities, i.e.
$\mathfrak{S}(\vec{\nu}) = \prod_{p \text{ prime}} \sigma_p(\vec{\nu}).$ The rest of this section is devoted to quantitative lower bounds for the local densities and singular series, using the ideas described in the previous section. 

\begin{lem}\label{lem:lowld} If there exists a non-singular solution $\vec{x}_0\in \z_p^n$ to $\vec{f}(\vec{x}_0)=\vec{\nu}$, then
$$\sigma_p(\vec{\nu})\geq (p^{-1}\max_I|\Delta_I(\vec{x}_0)|_p^2)^{n-r}.$$
 \end{lem}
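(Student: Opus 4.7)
The plan is to convert $\mathfrak{S}_p(\vec{\nu})$ into a count of solutions modulo $p^T$ via orthogonality, and then produce enough such solutions by multivariate Hensel lifting from $\vec{x}_0$. Orthogonality of additive characters on $(\z/p^r)^R$, combined with the identity $S_{p^s\vec{a}',p^r} = p^{sn}S_{\vec{a}',p^{r-s}}$ (writing $\vec{a} = p^s\vec{a}'$ with $(\vec{a}',p)=1$), gives, for every $T \geq 0$,
\[
\sum_{r=0}^{T}\,\sum_{\substack{1\leq a_i\leq p^r \\ (\vec{a},p)=1}} p^{-rn}\,S_{\vec{a},p^r}(\vec{\nu}) \;=\; p^{(R-n)T}\,N(p^T),
\]
where $N(p^T) := \#\{\vec{x}\bmod p^T:\vec{f}(\vec{x})\equiv\vec{\nu}\pmod{p^T}\}$. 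Hence $\mathfrak{S}_p(\vec{\nu}) = \lim_{T\to\infty} p^{(R-n)T}N(p^T)$, and, setting $\tau = \min_I v_p(\Delta_I(\vec{x}_0))$ so that $\max_I|\Delta_I(\vec{x}_0)|_p = p^{-\tau}$, it suffices to prove $N(p^T) \geq p^{(T-2\tau-1)(n-R)}$ for every $T \geq 2\tau+1$.

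For this lower bound I would pick an $R$-subset $I$ attaining the minimum and, after relabelling, assume $I=\{1,\ldots,R\}$, so that the square submatrix $J(\vec{y}) := (\partial f_i/\partial x_j(\vec{y}))_{1\leq i,j\leq R}$ satisfies $v_p(\det J(\vec{x}_0))=\tau$. For every tuple $(x_{R+1},\ldots,x_n)\in(\z/p^T)^{n-R}$ with $x_j \equiv x_{0,j}\pmod{p^{2\tau+1}}$, Taylor expansion of $\vec{f}$ about $\vec{x}_0$ yields
\[
\vec{f}(x_{0,1},\ldots,x_{0,R},x_{R+1},\ldots,x_n) \;\equiv\; \vec{\nu} \pmod{p^{2\tau+1}},
\]
since the partials of $\vec{f}$ have $p$-adic integer coefficients and each shift $x_j-x_{0,j}$ lies in $p^{2\tau+1}\z_p$. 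Applying multivariate Hensel to the map $F:\z_p^R\to\z_p^R$, $F(\vec{y}) := \vec{f}(y_1,\ldots,y_R,x_{R+1},\ldots,x_n)$, with approximate root $\vec{y}_0 = (x_{0,1},\ldots,x_{0,R})$ — the hypothesis $v_p(F(\vec{y}_0)-\vec{\nu}) \geq 2\tau+1 > 2\,v_p(\det DF(\vec{y}_0))$ being met — then produces a genuine $p$-adic root $\vec{y}\in\z_p^R$.

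Each admissible tuple $(x_{R+1},\ldots,x_n)\bmod p^T$ thus furnishes a solution $\vec{x}\bmod p^T$, and distinct tuples produce distinct solutions (already distinguished by their last $n-R$ coordinates). The number of admissible tuples is $p^{(T-2\tau-1)(n-R)}$, so $N(p^T)\geq p^{(T-2\tau-1)(n-R)}$ and
\[
\mathfrak{S}_p(\vec{\nu}) \;\geq\; p^{-(2\tau+1)(n-R)} \;=\; \bigl(p^{-1}\max_I|\Delta_I(\vec{x}_0)|_p^{\,2}\bigr)^{n-R},
\]
as claimed. The delicate point is calibrating the congruence strength: requiring only $x_j\equiv x_{0,j}\pmod{p^{\tau+1}}$ would leave the initial error at valuation $\tau+1$, insufficient for Hensel when the Jacobian determinant has valuation $\tau$ (one needs error valuation strictly greater than $2\tau$). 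That single factor of two is precisely what produces the square on $|\Delta_I|_p$ in the stated bound.
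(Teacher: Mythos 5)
Your proof is correct and follows essentially the same strategy as the paper's: express $\mathfrak{S}_p(\vec{\nu})$ as $\lim_{T\to\infty} p^{(R-n)T}N(p^T)$ and then use Hensel lifting from the non-singular solution $\vec{x}_0$ modulo $p^{2\tau+1}$ to count at least $p^{(n-R)(T-2\tau-1)}$ solutions modulo $p^T$. The paper states the Hensel step in a single sentence; you spell out the multivariate Hensel argument (fixing an $R\times R$ minor of maximal $p$-adic absolute value, freezing the complementary $n-R$ coordinates to lie in the residue class of $\vec{x}_0$ modulo $p^{2\tau+1}$, and solving for the remaining $R$ coordinates), which is a useful amplification but not a different route.
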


\begin{proof}
Take $e\in \z$ such that $\max_I|\Delta_I(\vec{x}_0)|_p=p^{-e}$ and assume that $m>2e+1$. The non-singular solution $\vec{x}_0\in \z_p^n$ gives a non-singular solution $\vec{a}$ modulo $p^{2e+1}$. Using Hensel's lemma (see, for example, \cite[Proposition 5.21 and Note 5.22]{Gre69}), we can lift this solution to at least $p^{(n-r)(m-2e-1)}$ non-singular solutions of $\vec{f}(\vec{x})\equiv \vec{\nu} \mod p^m$. Hence,
by (\ref{eq:defld}) the result follows. 
\end{proof}

\begin{lem}\label{lem:lowdisc}
For all primes $p$ for which there exists a solution $\vec{x}_0\in \z_p^n$ of $\vec{f}(\vec{x}_0)=\vec{0}$ we have
$$\max_I|\Delta_I(\vec{x}_0)|_p \geq |N|_p.$$
\end{lem}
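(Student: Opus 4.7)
The plan is to evaluate the Nullstellensatz identity \eqref{eq:defN} at the point $\vec{x}_0\in\z_p^n$ and exploit the ultrametric inequality on $\q_p$. First, I would substitute $\vec{x}=\vec{x}_0$ in \eqref{eq:defN} and use the hypothesis $\vec{f}(\vec{x}_0)=\vec{0}$ to eliminate the first sum, reducing the identity to
\begin{equation*}
\sum_{I} \Delta_I(\vec{x}_0)\, g_I(\vec{x}_0) \;=\; N.
\end{equation*}

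Next I would take the $p$-adic absolute value on both sides. Since each $g_I\in\z[\vec{x}]$ has integer coefficients and $\vec{x}_0\in\z_p^n$, the value $g_I(\vec{x}_0)$ lies in $\z_p$, so $|g_I(\vec{x}_0)|_p\leq 1$. Combining this with the strong triangle inequality gives
\begin{equation*}
|N|_p \;=\; \Bigl|\smallsum_{I}\Delta_I(\vec{x}_0)\,g_I(\vec{x}_0)\Bigr|_p
\;\leq\; \max_I |\Delta_I(\vec{x}_0)|_p\cdot |g_I(\vec{x}_0)|_p
\;\leq\; \max_I |\Delta_I(\vec{x}_0)|_p,
\end{equation*}
which is precisely the claimed inequality.

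There is essentially no obstacle here: the content of the lemma is entirely shifted onto the preceding application of the effective Nullstellensatz of Krick--Pardo--Sombra, which supplies the polynomials $g_i,g_I\in\z[\vec{x}]$ and the integer $N$ in \eqref{eq:defN}. The only subtlety worth checking is that the $g_I$ may be taken with integer (as opposed to rational) coefficients, so that $|g_I(\vec{x}_0)|_p\leq 1$ for every prime $p$; this is precisely the form in which the theorem of \cite{KPS01} is invoked on page \pageref{p:null}.
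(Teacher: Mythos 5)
Your proof is correct and follows the same route as the paper's: substitute $\vec{x}_0$ into the Nullstellensatz identity \eqref{eq:defN}, use $\vec{f}(\vec{x}_0)=\vec{0}$ to kill the first sum, then take $p$-adic absolute values and invoke $g_I\in\z[\vec{x}]$ together with $\vec{x}_0\in\z_p^n$ to get $|g_I(\vec{x}_0)|_p\le 1$. This matches the paper's argument essentially verbatim.
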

\begin{proof}
Let $p$ be a prime such that there exists an $\vec{x}_0\in \z_p^n$ with $\vec{f}(\vec{x}_0)=\vec{0}$, so that the first set of terms on the left-hand side of (\ref{eq:defN}) vanish for $\vec{x}=\vec{x}_0$. Then taking $p$-adic absolute values in (\ref{eq:defN}) shows that
$$\max_I|\Delta_I(\vec{x}_0)|_p \max_I |g_I(\vec{x}_0)|_p \geq |N|_p.$$
As $g_I\in \z[\vec{x}]$ we obtain $\max_I|\Delta_I(\vec{x}_0)|_p \geq |N|_p.$
\end{proof}

\begin{lem}\label{lem:goodprime} If $p$ is prime such that $p\nmid d$ and $p\nmid N$, then 
\begin{align}\label{eq:lemgoodprime}\sigma_p(\vec{0})-1\ll p^{-n/2+r+\epsilon}.\end{align}
\end{lem}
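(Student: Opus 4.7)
The strategy is to use Hensel's lemma to collapse $\mathfrak{S}_p(\vec{0})$ to a single mod-$p$ point count $M(p)$, and then to bound $M(p) - p^{n-R}$ by a Deligne/Weil-type estimate for a smooth affine complete intersection.

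First I would show that every $\vec{x}_0 \in \mathbb{F}_p^n$ with $\vec{f}(\vec{x}_0) \equiv \vec{0} \pmod p$ is non-singular modulo $p$. Reducing the Nullstellensatz identity~(\ref{eq:defN}) modulo $p$ and evaluating at such an $\vec{x}_0$ gives $\sum_I \Delta_I(\vec{x}_0)\, g_I(\vec{x}_0) \equiv N \not\equiv 0 \pmod p$, so at least one $\Delta_I(\vec{x}_0)$ is a unit modulo $p$, and hence the Jacobian of $\vec{f}$ has rank $R$ at $\vec{x}_0$ (the hypothesis $p \nmid d$ ensures that no derivative drops identically). Hensel's lemma then lifts each mod-$p$ solution uniquely to exactly $p^{(n-R)(N-1)}$ residue classes modulo $p^N$, so $M(p^N) = p^{(n-R)(N-1)} M(p)$ for all $N \geq 1$, and therefore
\begin{equation*}
\mathfrak{S}_p(\vec{0}) \;=\; \lim_{N \to \infty} p^{N(R-n)} M(p^N) \;=\; p^{R-n} M(p).
\end{equation*}
In particular, all the contributions from $r \geq 2$ in the defining series for $\mathfrak{S}_p(\vec{0})$ vanish.

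Next, orthogonality of additive characters yields $p^R M(p) = \sum_{\vec{a} \Mod p} S_{\vec{a},p}$. Isolating the $\vec{a} = \vec{0}$ contribution (which equals $p^n$) gives
\begin{equation*}
\mathfrak{S}_p(\vec{0}) - 1 \;=\; p^{R-n}\bigl( M(p) - p^{n-R} \bigr) \;=\; p^{-n} \sum_{\substack{\vec{a} \Mod p \\ \vec{a} \neq \vec{0}}} S_{\vec{a},p}.
\end{equation*}
Since there are $p^R - 1$ non-zero residues $\vec{a}$, the desired estimate (\ref{eq:lemgoodprime}) follows equivalently from either the uniform individual bound $|S_{\vec{a},p}| \ll p^{n/2 + \epsilon}$ for $\vec{a} \not\equiv \vec{0} \pmod p$ (summed trivially over $\vec{a}$), or from the single affine point count bound $|M(p) - p^{n-R}| \ll p^{n/2 + \epsilon}$.

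The main obstacle is establishing this Weil-type bound. In the cleanest case, when the leading form $\vec{a} \cdot \tilde{\vec{f}}$ defines a smooth projective hypersurface, Deligne's theorem for exponential sums of polynomials with non-singular leading form gives $|S_{\vec{a},p}| \leq (d-1)^n p^{n/2}$ directly. The subtlety is that $\vec{a} \cdot \tilde{\vec{f}}$ need not be smooth for every non-zero $\vec{a}$, even though $\widetilde{V}$ is. I would circumvent this either by appealing to a uniform Hooley--Katz style estimate for exponential sums over smooth complete intersections, or by invoking the Grothendieck--Deligne trace formula applied to the $\ell$-adic compactly supported cohomology of the smooth affine variety $V_{\mathbb{F}_p}$ (whose smoothness is established in the first step); either route yields the bound $|M(p) - p^{n-R}| \ll p^{n/2 + \epsilon}$ with implied constant depending only on $n$, $R$, $d$, completing the proof.
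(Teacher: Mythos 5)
Your proposal is correct and follows essentially the same route as the paper: reduce the Nullstellensatz identity~(\ref{eq:defN}) modulo $p$ to show that $V_{\mathbb{F}_p}$ is non-singular, then combine Hensel lifting with the Deligne-type point count $\#V(\mathbb{F}_p)=p^{n-R}+O(p^{n/2+\epsilon})$ for smooth affine complete intersections (the paper cites Schmidt's account \cite{Sch84} of the estimate worked out in Serre's appendix \cite{Ser77}, which is exactly the uniform version you sketch as an alternative to a naive Deligne hypersurface bound).
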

\begin{proof}
Suppose $V$ is singular over $\f_p$. Then, there exists an $\vec{x}\in \f_p^n$ such that $\vec{f}(\vec{x})=\vec{0}$ and $\Delta_I(\vec{x})=0$ over $\f_p$ for all $I\subset [n]$ with $|I|=r$. Considering (\ref{eq:defN}) over $\f_p$, it follows that $N\equiv 0 \mod p$. This contradicts our assumption, so $V$ is non-singular over $\f_p$. 

As pointed out by Schmidt \cite{Sch84}, a result of Deligne, worked out in the appendix of \cite{Ser77}, then shows that
$$\# V_{\f_p}(\vec{0}) = p^{n-r}+O(p^{n/2+\epsilon})$$
provided $p\nmid d$, where the implied constant depends at most on $n$ and $d$. Observe that if $\vec{x}\in \z^n$ is a solution of $\vec{f}(\vec{x})=\vec{0} \mod p^e$ for some $e\in \z_{>0}$, then $\vec{x}$ reduces to a non-singular point on $V_{\f_p}$. Hence, $\vec{x} \mod p^e$ can be obtained by lifting a point of $V_{\f_p}$. We conclude that
\begin{align*}
&\#\{\vec{x} \Mod p^m \mid \vec{f}(\vec{x})\equiv \vec{\nu} \Mod p^m\} = p^{m(n-r)}+O(p^{(m-1)(n-r)+n/2+\epsilon}).
\end{align*}
Using (\ref{eq:defld}), we obtain (\ref{eq:lemgoodprime}).
\end{proof}

\begin{prop}\label{prop:s0}
Suppose that for all primes $p$ there exists a solution $\vec{x}_0\in \z_p^n$ to $\vec{f}(\vec{x}_0)=\vec{0}$. Then
$$\mathfrak{S}(\vec{0}) \gg N^{-3(n-r)}.$$
\end{prop}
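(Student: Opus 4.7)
My plan is to exploit the Euler factorization $\mathfrak{S}(\vec{0})=\prod_p\mathfrak{S}_p(\vec{0})$ and split the primes into the set $\mathcal{B}=\{p: p\mid Nd\}$ of bad primes and the set of good primes $p\nmid Nd$. For each $p$ we have a non-singular $\z_p$-solution $\vec{x}_0$ of $\vec{f}(\vec{x}_0)=\vec{0}$, so \cref{lem:lowld} combined with \cref{lem:lowdisc} yields the uniform lower bound
\begin{equation*}
\mathfrak{S}_p(\vec{0})\;\geq\;\bigl(p^{-1}\max_I|\Delta_I(\vec{x}_0)|_p^{2}\bigr)^{n-R}\;\geq\;\bigl(p^{-1}|N|_p^{2}\bigr)^{n-R}.
\end{equation*}

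For the bad primes I would take the product of the displayed inequality over $p\in\mathcal{B}$. The first factor contributes $\prod_{p\mid Nd}p^{-(n-R)}\geq(Nd)^{-(n-R)}$, since the radical of $Nd$ is at most $Nd$. For the second factor, $|N|_p=1$ whenever $p\nmid N$, so by the product formula $\prod_{p\mid Nd}|N|_p^{2(n-R)}=\prod_{p\mid N}|N|_p^{2(n-R)}=N^{-2(n-R)}$. Hence
\begin{equation*}
\prod_{p\mid Nd}\mathfrak{S}_p(\vec{0})\;\geq\;(Nd)^{-(n-R)}N^{-2(n-R)}\;\gg\;N^{-3(n-R)},
\end{equation*}
the implied constant absorbing the fixed factor $d^{-(n-R)}$.

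For the good primes I would invoke \cref{lem:goodprime}, which gives $\mathfrak{S}_p(\vec{0})=1+O(p^{-n/2+R+\epsilon})$. The Birch-type hypothesis (\ref{eq:Birchass}) implies $n-\Delta>R(R+1)(d-1)2^{d-1}$, so in particular $n>2R+2$ and therefore $\sum_p p^{-n/2+R+\epsilon}$ converges for $\epsilon$ small enough. Consequently there is a threshold $P_0=P_0(n,R,d)$ such that $\mathfrak{S}_p(\vec{0})\geq 1/2$ for all good primes $p\geq P_0$, and $\prod_{p\geq P_0,\,p\nmid Nd}\mathfrak{S}_p(\vec{0})\gg 1$ independently of $C$ and $N$. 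The finitely many good primes $p<P_0$ contribute a product which, again by \cref{lem:lowld}, is bounded below by $\prod_{p<P_0}p^{-(n-R)}\gg 1$ (with an implied constant depending only on $n,R,d$). Multiplying all three contributions gives $\mathfrak{S}(\vec{0})\gg N^{-3(n-R)}$.

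The only step that demands any care is checking the convergence of the good-prime product; the inequality $n/2-R>1$ must be read off from the Birch assumption, but since $R(R+1)(d-1)2^{d-1}\geq 2R(R+1)\geq 2R+2$ this follows unconditionally, so no additional obstacle arises.
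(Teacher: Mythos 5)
Your proposal is correct and takes essentially the same route as the paper: factor the singular series, use \cref{lem:lowld} together with \cref{lem:lowdisc} and the product formula for the primes dividing $dN$, and control the remaining Euler product via \cref{lem:goodprime}. You are in fact slightly more careful than the paper's exposition in explicitly handling the finitely many small good primes below a fixed threshold $P_0$ (noting that Lemma \ref{lem:lowdisc} gives $|N|_p=1$ there, so \cref{lem:lowld} still yields a lower bound depending only on $n,R,d$).
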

\begin{proof}
Let $S$ be the finite set of primes for which $p\mid dN$. Applying \cref{lem:lowld} and \cref{lem:lowdisc} and using the product formula for $|\cdot|_p$ we obtain
\begin{align}\label{eq:lemss1}\prod_{p\in S} \sigma_p(\vec{0}) \geq \prod_{ p\in S}(p^{-1}|N|_p^2)^{n-r}\gg (N^{-1}N^{-2})^{n-r}=N^{3(r-n)}.\end{align}
It follows from \cref{lem:goodprime} that
$$\prod_{p\not \in S} \sigma_p(\vec{0})= \prod_{p\not \in S} 1+O(p^{-n/2+r+\epsilon})\gg 1,$$
where the implied constant does not depend on $C$. 
Therefore, we conclude that
\begin{align*}
\mathfrak{S}(\vec{0}) 
&= \prod_{p\in S} \sigma_p(\vec{0})\prod_{p\not \in S} \sigma_p(\vec{0}) 
\gg N^{-3(n-r)}. \qedhere
\end{align*} 
\end{proof}

\subsection{Lower bound for the singular integral}\label{sec:lbsi}
The following lemma is a quantitative version of the implicit function theorem. We use this result to prove \cref{lem:lowint}, which is the real analogue of \cref{lem:lowld}. Recall that $\tilde{\Delta}_I(\vec{x})$ is the $r\times r$ minor of the Jacobian of $\tilde{f}$ with columns determined by $I$.
Abbreviate $\tilde{\Delta}_{\{1,2,\ldots, r\}}(\vec{x})$ by $\tilde{\Delta}(\vec{x})$. 

\begin{lem}\label{lem:ift}\begin{sloppypar} Given $\vec{x}_0\in \r^n$ with $|\vec{x}_0|\leq 1$, assume that 
$$M:=\max\limits_{I\subset [n], |I|=r} |\tilde{\Delta}_I(\vec{x}_0)|=|\tilde{\Delta}(\vec{x}_0)| >0.$$ 
Let $\vec{g}: \r^n\to \r^n$ be given by
$$\vec{g}: \vec{x}\mapsto (\tilde{f}_1(\vec{x}),\ldots, \tilde{f}_r(\vec{x}), x_{r+1}, \ldots, x_n).$$
Then there are open subsets $U\subset \r^n$ and $W\subset \r^n$ with $\vec{x}_0\in U$ and $\vec{g}(\vec{x}_0)\in W$ such that $\vec{g}$ is a bijection from $U$ to $W$ and has differentiable inverse $\vec{g}^{-1}:W\to U$ satisfying $\det((\vec{g}^{-1})')\geq M^{-1}$. Furthermore, one may choose
$$W=\left\{\vec{y}\in \r^n : |\vec{g}(\vec{x}_0)-\vec{y}| < \frac{M^2}{\widetilde{C}^{2r-1}}\right\}.$$
\end{sloppypar}
\end{lem}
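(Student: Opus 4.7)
The plan is to apply the contraction-mapping formulation of the inverse function theorem to $g$, carrying the dependence on $M$, $\widetilde{C}$, and $\Lambda$ explicitly throughout. The starting point is the block structure
\[
g'(\vec{x}) = \begin{pmatrix} A(\vec{x}) & B(\vec{x}) \\ 0 & I_{n-R}\end{pmatrix},\qquad A_{ij}=\partial \tilde{f}_i/\partial x_j\ (1\le i,j\le R),\ B_{ij}=\partial\tilde f_i/\partial x_{R+j},
\]
which gives $\det g'(\vec{x})=\det A(\vec{x})=\tilde{\Delta}(\vec{x})$, so $\det g'(\vec{x}_0)=\pm M\neq 0$ and $g'(\vec{x}_0)$ is invertible. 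Since $\partial\tilde{f}_i/\partial x_j$ is a polynomial of degree $d-1$ with coefficients $\ll \widetilde{C}$, the entries of $A(\vec{x}_0)$ and $B(\vec{x}_0)$ are $\ll \widetilde{C}\Lambda^{d-1}$ for $|\vec{x}_0|\le\Lambda$. Cramer's rule bounds the $(R-1)\times(R-1)$ cofactors of $A(\vec{x}_0)$ by $\ll \widetilde{C}^{R-1}\Lambda^{(R-1)(d-1)}$, and using the identity block explicitly one obtains an operator-norm bound on $T := g'(\vec{x}_0)^{-1} = \bigl(\begin{smallmatrix} A^{-1} & -A^{-1}B \\ 0 & I\end{smallmatrix}\bigr)$ explicitly controlled in terms of $M$, $\widetilde{C}$, and $\Lambda$.

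For each target $\vec{y}$, I would introduce $\phi_{\vec{y}}(\vec{x}):=\vec{x}+T(\vec{y}-g(\vec{x}))$, whose fixed points are exactly the solutions of $g(\vec{x})=\vec{y}$. Since $\phi_{\vec{y}}'(\vec{x})=T\bigl(g'(\vec{x}_0)-g'(\vec{x})\bigr)$ and only the top $R$ rows of $g'$ depend on $\vec{x}$, a bound on the second derivatives of $\tilde{f}_i$ (polynomials of degree $d-2$ with coefficients $\ll\widetilde{C}$) yields the Lipschitz estimate $|g'(\vec{x})-g'(\vec{x}_0)|\ll \widetilde{C}\Lambda^{d-2}|\vec{x}-\vec{x}_0|$ on the ball $|\vec{x}|\le 2\Lambda$. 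Combining this with the bound on $|T|$ forces $|\phi_{\vec{y}}'(\vec{x})|\le\tfrac12$ on a ball $B(\vec{x}_0,\delta)$ of radius $\delta$ of the appropriate polynomial order in $M,\widetilde{C},\Lambda$, so $\phi_{\vec{y}}$ is a $\tfrac12$-contraction there. The elementary estimate $|\phi_{\vec{y}}(\vec{x})-\vec{x}_0|\le \tfrac{1}{2}|\vec{x}-\vec{x}_0|+|T||\vec{y}-g(\vec{x}_0)|$ then guarantees that, provided $|\vec{y}-g(\vec{x}_0)|\le \delta/(2|T|)$, $\phi_{\vec{y}}$ maps $B(\vec{x}_0,\delta)$ into itself and admits a unique fixed point by Banach's theorem. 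Taking $W$ to be the open ball of the above radius around $g(\vec{x}_0)$ and $U:=g^{-1}(W)\cap B(\vec{x}_0,\delta)$ yields the required bijection $g\colon U\to W$, whose inverse is differentiable by the classical argument.

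Finally, for $\det((g^{-1})')\ge M^{-1}$ on $W$, I use $\det((g^{-1})')(\vec{y})=1/\det g'(g^{-1}(\vec{y}))$ and upper-bound $|\tilde{\Delta}|$ on $U$. Because $\tilde{\Delta}$ is a polynomial of degree $R(d-1)$ with coefficients polynomially bounded in $\widetilde{C}$, its gradient is $\ll \widetilde{C}^R\Lambda^{R(d-1)-1}$ on $|\vec{x}|\le 2\Lambda$, so shrinking $\delta$ by an absolute factor (absorbed into the implicit constant in the radius of $W$) forces $|\tilde{\Delta}(\vec{x})-\tilde{\Delta}(\vec{x}_0)|\le M$ throughout $U$, whence $|\det g'(\vec{x})|\le 2M$ and $|\det((g^{-1})')(\vec{y})|\ge (2M)^{-1}$. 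The main obstacle will be bookkeeping: three constraints on $\delta$---the contraction condition, the self-mapping condition on $\phi_{\vec{y}}$, and the control $|\tilde{\Delta}(\vec{x})|\le 2M$---each contribute different powers of $\widetilde{C}$ and $\Lambda$, and one must verify that the stated radius $M^2/(\widetilde{C}^{2R-1}\Lambda^{R(d-1)-1})$ for $W$ is indeed the binding combination (or at any rate is dominated by what the worst of the three produces). This amounts to careful tracking of cofactor norms rather than any new idea.
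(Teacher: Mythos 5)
Your plan is a genuinely different route from the paper's: you set up the contraction $\phi_{\vec{y}}(\vec{x})=\vec{x}+T(\vec{y}-g(\vec{x}))$ with $T=g'(\vec{x}_0)^{-1}$ and appeal to Banach's fixed point theorem, whereas the paper follows the Spivak/PSW16 version of the inverse function theorem: it bounds $|g(\vec{x}_1)-g(\vec{x}_2)|$ from below directly, via a lower bound $|Dg(\vec{x}_0)\vec{h}|\gg \widetilde{C}^{1-R}M|\vec{h}|$ (obtained by scaling the Jacobian so that the elementary $|\vec{h}|\ll |A\vec{h}|/\det A$ bound applies) together with a Lipschitz bound on $g-Dg(\vec{x}_0)\cdot\vec{x}$ from the second derivatives, and then takes $W$ to be a ball whose radius is the infimum of $|g(\vec{x})-g(\vec{x}_0)|$ over the boundary of $U$. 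Both approaches are legitimate quantifications of the inverse function theorem, and the two sets of ingredients you list (cofactor bounds for $\|T\|$, second-derivative bound, a determinant‑continuity argument to get $|\tilde\Delta|\le 2M$ on $U$) are the right ones. The concern you raise yourself in the last paragraph is real, though: propagating $\|T\|_{\max}\ll \widetilde{C}^R\Lambda^{R(d-1)}/M$ through both the contraction step and the self-mapping step costs you extra powers of $\widetilde{C}$ and $\Lambda$, giving a $W$ of radius $\asymp M^2/(\widetilde{C}^{2R+1}\Lambda^{2R(d-1)+d-2})$, which is smaller than the stated $M^2/(\widetilde{C}^{2R-1}\Lambda^{R(d-1)-1})$. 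Part of this can be recovered by noticing that in $\phi_{\vec{y}}'=T\,(g'(\vec{x}_0)-g'(\vec{x}))$ only the top $R$ rows of $g'(\vec{x}_0)-g'(\vec{x})$ are nonzero, so only the $A_0^{-1}$ block of $T$ (of size $\ll\widetilde{C}^{R-1}\Lambda^{(R-1)(d-1)}/M$), not the larger $-A_0^{-1}B_0$ block, enters the contraction estimate; this recovers the paper's $U$-radius $\asymp M/(\widetilde{C}^R\Lambda^{R(d-1)-1})$. The remaining discrepancy in the $W$-radius is immaterial for the application (it only changes absolute constants in the exponents of \cref{thm:main1}), but if you want to match the stated constant you would have to follow the paper's "direct injectivity" route rather than bound $|T(\vec{y}-g(\vec{x}_0))|$ by $\|T\|_{\max}\,|\vec{y}-g(\vec{x}_0)|$.
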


\begin{proof} We explicitly find a small open neighbourhood of $\vec{x}_0$ in which the implicit function theorem is applicable, following the proof of \cite[Theorem 2.11]{Spi65} or \cite[Lemma 9.3]{PSW16}. 
Note that 
$\label{eq:M}M\ll \widetilde{C}^r.$ 
Let $U$ be the closed rectangle given by
\begin{align}\label{def:U} U=\left\{\vec{x}\in \r^n: |\vec{x}-\vec{x}_0|\leq a \frac{M}{\widetilde{C}^{r}}\right\},\end{align}
for a sufficiently small constant $a\in \r$ depending only on $d,n$ and $r$. Then for $\vec{x} \in U$ we have 
that
$|\vec{x}|\leq |\vec{x}-\vec{x}_0|+|\vec{x}_0| 
 \ll 1.$
Hence, for $\vec{x}\in U$ one finds that $\pdv{g_i}{x_j x_k}(\vec{x}) \ll \widetilde{C}$ for all $1\leq i,j,k \leq n$. 
It follows that
\begin{align}\label{eq:lem4.5prop1}\left|\pdv{g_i}{x_j}(\vec{x}) - \pdv{g_i}{x_j}(\vec{x}_0)\right|\ll \widetilde{C}|\vec{x}-\vec{x}_0|
\ll a\widetilde{C}^{1-r}M.\end{align}
Let $D\vec{g}$ be the Jacobian matrix of $\vec{g}$ and write $\vec{g}'(\vec{x})=\vec{g}(\vec{x})-D\vec{g}(\vec{x}_0)\cdot \vec{x}$. As 
$$\pdv{\left(\vec{g}'(\vec{x})\right)_i}{x_j}=\pdv{g_i}{x_j}(\vec{x}) - \pdv{g_i}{x_j}(\vec{x}_0),$$ for $\vec{x}_1, \vec{x}_2\in U$ we have
\begin{align} \label{eq:liminv4} 
|\vec{g}'(\vec{x}_1)-\vec{g}'(\vec{x}_2)| \ll a\widetilde{C}^{1-r}M|\vec{x}_1-\vec{x}_2|.
\end{align}

Given an invertible $n\times n$-matrix $A$, let $|A|=\max_{i,j} |A_{i,j}|$ the max norm. For all $\vec{h}\in \r^n$ one has
$|\vec{h}|
\leq \frac{|\mathrm{adj}(A)|}{\det A}|A\vec{h}|$
with $\mathrm{adj}(A)$ the adjugate of $A$. 
Let $A=D\vec{g}(\vec{x}_0)$. 
Then $|\mathrm{adj}(A)|\ll \widetilde{C}^{r-1}$. 
Since $M=|\tilde{\Delta}(\vec{x}_0)|= |\det(D\vec{g}(\vec{x}_0))|$, we find that for $\vec{x}_1, \vec{x}_2\in U$ we have
$$|D\vec{g}(\vec{x}_0)(\vec{x}_1-\vec{x}_2)|\gg 
\widetilde{C}^{1-r}M|\vec{x}_1-\vec{x}_2|.$$
Hence, 
\begin{align*}
|\vec{g}'(\vec{x}_1)-\vec{g}'(\vec{x}_2)|+|\vec{g}(\vec{x}_1)-\vec{g}(\vec{x}_2)| 
&\geq  |D\vec{g}(\vec{x}_0)(\vec{x}_1-\vec{x}_2)|\\
&\gg  \widetilde{C}^{1-r} M|\vec{x}_1-\vec{x}_2|.\end{align*}
Therefore, using (\ref{eq:liminv4}) for $a$ small enough, we find for all $\vec{x}_1,\vec{x}_2\in U$ that
$$|\vec{g}(\vec{x}_1)-\vec{g}(\vec{x}_2)|\gg \widetilde{C}^{1-r} M|\vec{x}_1-\vec{x}_2|.$$
This implies that if $\vec{x}$ is on the boundary of $U$ we have
\begin{align}\label{eq:boundary}|\vec{g}(\vec{x})-\vec{g}(\vec{x}_0)|\gg  \widetilde{C}^{1-r}M |\vec{x}-\vec{x}_0|=a\frac{M^2}{\widetilde{C}^{2r-1}}\end{align}
Set $b\gg a\frac{M^2}{\widetilde{C}^{2r-1}}$ so that for $\vec{x}$ on the boundary of $U$ it holds that $|\vec{g}(\vec{x})-\vec{g}(\vec{x}_0)|\gg b$ and define
$$W=\{\vec{y}\in \r^n : |\vec{y}-\vec{g}(\vec{x}_0)|<\tfrac{1}{2}b\}.$$
The proof of \cite[Lemma 9.3]{PSW16} ensures that $W$ has the required properties (after shrinking $U$). 
\end{proof}

\begin{lem}\label{lem:lowint}
Suppose that $\vec{x}_0\in \r^n$ with $|\vec{x}_0|\leq 1$ satisfies $\vec{\tilde{f}}(\vec{x}_0)=\vec{0}$ such that $M=\max\limits_{I\subset [n], |I|=r} |\tilde{\Delta}_I(\vec{x}_0)|>0$. Then, we have
$$J(\vec{0})\gg M^{-1}\left(\frac{M^2}{\widetilde{C}^{2r-1}}\right)^{n-r}.$$
\end{lem}
\begin{proof}
In \cite[Paragraph 11]{Sch82}, Schmidt shows that for $\vec{\mu}\in \r^r$ we have
$$J(\vec{\mu}) = \lim_{t \to \infty} t^r \int_{|\vec{\tilde{f}}(\vec{x})-\vec{\mu}|\leq t^{-1}}\prod_{i=1}^r (1-t|\tilde{f}_i(\vec{x})-\mu_i|) \dd \vec{x}.$$
Let $\one_{1/2t}:\r\to \{0,1\}$ be the characteristic function of the interval $[-\frac{1}{2t},\frac{1}{2t}]$. Let $U,W,g$ as in \cref{lem:ift}. Then,
$$J(\vec{0}) \geq \lim_{t \to \infty} \left(\frac{t}{2}\right)^r \int_U \prod_{i=1}^r\one_{1/2t}\circ\tilde{f}_i(\vec{x}) \dd \vec{x}.$$
Applying the change of variables as in \cref{lem:ift} we obtain
\begin{align*}
\int_U \prod_{i=1}^r\one_{1/2t}\circ\tilde{f}_i(\vec{x}) \dd \vec{x} 
&= \int_W |\det((g^{-1})')|\prod_{i=1}^r\one_{1/2t}(y_i) \dd \vec{y} \\
&\geq \int_W M^{-1}\prod_{i=1}^r\one_{1/2t}(y_i) \dd \vec{y}. 
\end{align*}
As for $t$ sufficiently large $\one_{1/2t}\equiv 0$ outside $W$, the theorem follows.
\end{proof}

\begin{lem}\label{lem:lowdiscr} Let $\vec{x}_0\in \r^n$ be such that $|\vec{x}_0|=1$ and $\vec{\tilde{f}}(\vec{x}_0)=\vec{0}$. Then for some $1\leq j\leq n$ one has
$$\max_I |\tilde{\Delta}_I(\vec{x}_0)| \gg \tilde{\mathfrak{C}}^{-1}\tilde{N}_j.$$ 
\end{lem}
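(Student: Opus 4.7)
The plan is to exploit the sup-norm condition $|\vec{x}_0|=1$, which forces some coordinate to satisfy $|x_{0,j}|=1$, so that $\vec{x}_0$ lies on the affine patch $\{x_j=1\}$ (up to sign) for a suitable choice of $j$. Once $j$ is fixed, the quantitative Nullstellensatz identity (\ref{eq:defNtilde}) can be evaluated directly at $\vec{x}_0$ and rearranged to give the desired lower bound.

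First, choose $j \in \{1,\ldots,n\}$ with $|x_{0,j}|=1$; this exists because $|\vec{x}_0|=\max_i |x_{0,i}|=1$. Since each $\tilde{f}_i$ is homogeneous of degree $d$ and each $\tilde{\Delta}_I$ is homogeneous of degree $R(d-1)$, replacing $\vec{x}_0$ by $-\vec{x}_0$ only changes $\vec{\tilde{f}}(\vec{x}_0)$ and $\tilde{\Delta}_I(\vec{x}_0)$ by a global sign, and hence does not affect the absolute values appearing in the statement. We may therefore assume $x_{0,j}=1$, and evaluate the identity (\ref{eq:defNtilde}) at $\vec{x}=\vec{x}_0$.

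Since $\vec{\tilde{f}}(\vec{x}_0)=\vec{0}$, the first sum in (\ref{eq:defNtilde}) vanishes, leaving
\[
\sum_I \tilde{\Delta}_I(\vec{x}_0)\, \tilde{g}_{I,j}(\vec{x}_0) = \tilde{N}_j.
\]
Taking absolute values and applying the triangle inequality,
\[
\tilde{N}_j \;\leq\; \binom{n}{R}\, \max_I |\tilde{\Delta}_I(\vec{x}_0)| \cdot \max_I |\tilde{g}_{I,j}(\vec{x}_0)|.
\]
Since $\tilde{g}_{I,j} \in \z[x_1,\ldots,\hat{x_j},\ldots,x_n]$ is of degree bounded in terms of $n,R,d$ (from the quantitative Nullstellensatz of \cite{KPS01}) and each coordinate of $\vec{x}_0$ has absolute value at most $1$, a trivial term-by-term estimate gives $|\tilde{g}_{I,j}(\vec{x}_0)| \ll \|\tilde{g}_{I,j}\|_\infty \ll \tilde{\mathfrak{C}}$ by (\ref{eq:defctilde}), with the implied constant depending only on $n,R,d$.

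Rearranging yields $\max_I |\tilde{\Delta}_I(\vec{x}_0)| \gg \tilde{\mathfrak{C}}^{-1}\, \tilde{N}_j$, as claimed. There is no substantial obstacle here; the only care needed is the initial reduction to an affine patch (which is why the choice of $j$ is dictated by where $\vec{x}_0$ attains its sup-norm) and the observation that the ambiguity $\vec{x}_0 \leftrightarrow -\vec{x}_0$ is harmless by homogeneity.
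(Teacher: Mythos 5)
Your argument is correct and coincides with the paper's own proof: choose the coordinate $j$ where $\vec{x}_0$ attains its sup-norm, substitute into the quantitative Nullstellensatz identity~(\ref{eq:defNtilde}), use $\vec{\tilde{f}}(\vec{x}_0)=\vec{0}$ to kill the first sum, and bound $|\tilde{g}_{I,j}(\vec{x}_0)|\ll\tilde{\mathfrak{C}}$ since $|\vec{x}_0|\leq 1$. The explicit reduction to $x_{0,j}=1$ via the homogeneity-based sign argument is a sensible clarification that the paper leaves implicit.
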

\begin{proof}
This is essentially the same proof as the proof of \cref{lem:lowdisc}. Substitute $\vec{x}=\vec{x}_0$ in (\ref{eq:defNtilde}) for a choice of $j$ such that $(x_0)_j=|\vec{x}_0|=1$. Then the first sum vanishes and we find that
$$\max_I|\tilde{\Delta}_I(\vec{x}_0)|\max_I|\tilde{g}_{I,j}(\vec{x}_0)| \gg |\tilde{N}_j|.$$
Note that $\tilde{g}_{I,j}(\vec{x}_0)\ll \|\tilde{g}_{I,j} \|_\infty \ll \mathfrak{\widetilde{C}}$. This implies that
\begin{align}
\max_I|\tilde{\Delta}_I(\vec{x}_0)|&\gg \mathfrak{\widetilde{C}}^{-1}\tilde{N}_j.  \qedhere 
\end{align}
\end{proof} 

\begin{cor}\label{cor:J0}
Suppose $\vec{\tilde{f}}$ has a real zero. Then
$$J(\vec{0})\gg \frac{\tilde{N}^{2(n-r)-1}}{\tilde{\mathfrak{C}}^{2(n-r)-1}\widetilde{C}^{(2r-1)(n-r)}}.$$
\end{cor}
\begin{proof} Observe that by homogeneity of $\vec{\tilde{f}}$ we can assume that the non-singular real zero $\vec{x}_0$ satisfies $|\vec{x}_0|=1$. The corollary then follows directly from \cref{lem:lowint} and \cref{lem:lowdiscr}. \end{proof}

\subsection{Main theorems}\label{sec:strongapprox}
\begin{proof}[Proof of \cref{thm:main1}]
From \cref{thm:ass}, it follows that for $P$ satisfying
$$P\gg \left(\frac{C\widetilde{C}^{K/(d-1)+r^2-1}}{\mathfrak{S}(\vec{0})J(\vec{0})}\right)^{1/\delta}$$
we have that $M(P,\vec{0})> 1$ (if the implied constant is large enough). Hence, there exists a non-trivial integer zero $\vec{x}$ of $\vec{f}$ with $|\vec{x}| \leq P$. 

By \cref{prop:s0}, \cref{cor:J0}, (\ref{eq:defc}) and $\tilde{N}\geq 1$ it follows that
\begin{align*}
\mathfrak{S}(\vec{0})J(\vec{0}) &\gg \tilde{\mathfrak{C}}^{-(2(n-r)-1)}\widetilde{C}^{-2r(n-r)}\left(\frac{\tilde{N}^2}{N^3}\right)^{n-r}\tilde{N}^{-1}\\
&\gg \mathfrak{C}^{-3(n-r)}\tilde{\mathfrak{C}}^{-(2(n-r)-1)}\widetilde{C}^{-(2r-1)(n-r)}.
\end{align*}
Using that $(n+1)(n-r)< n^2-r$ one finds
\begin{align*}
\frac{C\widetilde{C}^{K/(d-1)+r^2-1}}{\mathfrak{S}(\vec{0})J(\vec{0})}\ll \mathfrak{C}^{\frac{3n^2}{n+1}}\tilde{\mathfrak{C}}^{\frac{2n^2}{n+1}}\frac{C}{\mathfrak{C}^{\frac{r}{n+1}}}\frac{\widetilde{C}^{K/(d-1)+(2r-1)(n-r)+r^2-1}}{\mathfrak{\widetilde C}^{1+\frac{r}{n+1}}}
\end{align*}
As $K\leq \frac{n}{2^{d-1}}$, the two fractions on the right-hand side are bounded by $1$, so that one can take
\begin{align*}
P&\gg (C^3\widetilde{C}^2)^{2n^2r^{n+1}(d-1)^nd\cdot\frac{K+r(r+1)(d-1)}{K-r(r+1)(d-1)}}.\qedhere
\end{align*}
\end{proof}

\begin{remark} \label{ex:exp}
The upper bound (\ref{eq:mtestimate}) should be compared with the following example generalising Kneser's example in \cite{Cas56}. Suppose $d\geq 4$ is even and let 
$$f(\vec{x})=x_1^d-\sum_{i=1}^{n-1}(x_{i+1}-cx_i^{d/2})^2-1$$
for some $c\in \n$. Then $C=\widetilde{C}=c^2$, $f$ is non-singular and $\vec{x}$ given by $x_i=c^{\sum_{j=1}^{i-1}(d/2)^{i-1}}$ is a zero of $f$. If $\vec{a}$ is a non-trivial integer zero of $\vec{f}$, then clearly $a_1\neq 0$. Moreover,
$$|a_{i+1}-ca_i^{d/2}|\leq |a_1|^{d/2}$$
for all $i=1,2,\ldots, n-1$. Inductively one can show that
$$|a_{i}|\gg c^{\left(d/2\right)^{i-2}}|a_1| \quad \quad (2\leq i \leq n),$$
where the implied constant only depends on $i$. Hence, in case $r=1$ and $d\geq 4$ is even, the right-hand side of (\ref{eq:mtestimate}) is at least $\displaystyle C^{\tfrac{1}{2}(d/2)^{n-2}}$. Note that---in contrast to the cases $d=2$ and $d=3$---the exponent of $C$ grows exponentially in $n$. 

\end{remark}

We can do slightly better than \cref{thm:main1} in case we add the assumption that the polynomials $\vec{f}$ are homogeneous:

\begin{thm}\label{thm:main2}
Suppose $\tilde{f}_i\in \z[\vec{x}]$ for $i=1,\ldots, r$ are homogeneous polynomials of degree $d$ so that
$K-r(r+1)(d-1)>0$,
$\vec{\tilde{f}}$ has a zero over $\z_p$ for all primes $p$ and a real zero. Assume that the corresponding projective variety $\widetilde{V}$ is non-singular. Then there exists an $\vec{x}\in \z^n\backslash\{0\}$, polynomially bounded by $C$ and $\widetilde{C}$, such that $\tilde{f}(\vec{x})=\vec{0}$, in fact
\begin{equation} \label{eq:mthom}
|\vec{x}|\ll \widetilde{C}^{6n^2r^n(d-1)^{n-2}d\cdot\frac{K+r(r+1)(d-1)}{K-r(r+1)(d-1)}}.
\end{equation}
\end{thm}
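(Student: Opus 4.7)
The plan is to mimic the proof of \cref{thm:main1}, replacing the affine Nullstellensatz data $(N,\mathfrak{C})$ by the projective analogues $(\tilde N, \tilde{\mathfrak C})$ throughout. In the homogeneous setting we identify $\vec f$ with $\vec{\tilde f}$, so that $C=\widetilde C$, but the affine variety $V$ now contains the origin and is typically singular there; hence \cref{prop:s0} is not directly available and one must instead exploit the non-singularity of the projective variety $\widetilde V$. The essential gain --- from the exponent $20$ that would result from naively substituting $C=\widetilde C$ into \cref{thm:main1} down to the exponent $12$ asserted here --- comes precisely from eliminating $\mathfrak C$ in the singular-series lower bound.

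First, I would apply \cref{thm:ass} with $\vec\nu=\vec 0$. Since the trivial zero $\vec x=\vec 0$ contributes $1$ to $M(P,\vec 0)$, a non-trivial integer zero in $P\mathcal B$ is produced once $M(P,\vec 0)\geq 2$; this reduces the theorem to the quantitative estimate
\[
P^{\,\delta}\;\gg\;\frac{\widetilde C^{K/(d-1)+R^2}}{\mathfrak S(\vec 0)\,J(\vec 0)}.
\]

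The key new step is a homogeneous analogue of \cref{prop:s0} expressed purely in terms of $\tilde N$. For a non-singular $\vec x_0\in\z_p^n$ with $\vec{\tilde f}(\vec x_0)=\vec 0$, homogeneity lets me rescale so that $|\vec x_0|_p=1$, and I pick $j$ with $|(x_0)_j|_p=1$. Evaluating the projective Nullstellensatz identity (\ref{eq:defNtilde}) at $\vec x_0/(x_0)_j\in\z_p^n$, the $\vec{\tilde f}$-terms vanish, and homogeneity of $\tilde\Delta_I$ (of degree $R(d-1)$) together with the unit $(x_0)_j$ yields
\[
\max_I|\tilde\Delta_I(\vec x_0)|_p\;\geq\;|\tilde N_j|_p\;\geq\;|\tilde N|_p,
\]
mirroring \cref{lem:lowdiscr}. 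Combined with \cref{lem:lowld} this supplies the lower bound $\mathfrak S_p(\vec 0)\geq (p^{-1}|\tilde N|_p^2)^{n-R}$ at the bad primes. For $p\nmid d\tilde N$ the variety $\widetilde V$ is non-singular over $\f_p$, so counting affine $\f_p$-points via the cone identity $\#V(\f_p)=1+(p-1)\#\widetilde V(\f_p)$, Deligne's bound and Hensel lifting reproduce $\mathfrak S_p(\vec 0)=1+O(p^{-n/2+R+\epsilon})$ exactly as in \cref{lem:goodprime}. Taking the product over all primes and invoking the product formula for $|\tilde N|_p$ gives
\[
\mathfrak S(\vec 0)\;\gg\;\tilde N^{-3(n-R)}\;\geq\;\tilde{\mathfrak C}^{-3(n-R)}.
\]

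For the singular integral I would invoke \cref{cor:J0} directly, as it is already phrased in the projective data $\tilde{\mathfrak C}$, $\widetilde C$ and $\tilde N$. Substituting both lower bounds into the displayed inequality above, taking $\delta$ just below $(K-R(R+1)(d-1))/(K+R(R+1)(d-1))$ as in \cref{thm:ass}, and using $\tilde{\mathfrak C}\leq \widetilde C^{4n(n-1)R(Rd)^{n-1}}$ together with the numerical simplification $(n+1)(n-R)<n^2$ as in the closing calculation for \cref{thm:main1}, one arrives at (\ref{eq:mthom}). The hard part will be the homogeneous singular-series bound: the dehomogenisation-and-rescaling trick must be executed with care, since (\ref{eq:defNtilde}) is only an identity on the affine chart $x_j=1$ and the inequality must be transported back to $\vec x_0$ via homogeneity without losing $p$-adic magnitude, and one must verify that the Deligne bound for the projective $\widetilde V$ at good primes translates into the same shape $\mathfrak S_p(\vec 0)=1+O(p^{-n/2+R+\epsilon})$ on the affine cone.
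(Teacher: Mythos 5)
Your proposal follows essentially the same route as the paper's proof of \cref{thm:main2}: it faithfully elaborates the paper's terse ``mutatis mutandis'' appeal to \cref{prop:s0} (dehomogenising at a unit coordinate so that (\ref{eq:defNtilde}) applies, using the non-singularity of $\widetilde V$ and a cone point-count at the good primes), and combines this with \cref{cor:J0} and the definition of $\tilde{\mathfrak C}$ exactly as the paper does. The only wrinkle---and it is one shared with the paper rather than a gap you introduced---is that the step $|\tilde N_j|_p\ge|\tilde N|_p$ with $\tilde N=\min_j\tilde N_j$ is not automatic for the $p$-adic absolute value, since $\tilde N$ need not divide $\tilde N_j$; replacing $\tilde N$ by $\prod_j\tilde N_j$ would make the product-formula argument airtight at the cost of an extra factor of $n$ in the exponent.
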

\begin{proof}
As in the proof of \cref{thm:main1} (with $C=\widetilde{C}$) we use that for $P$ satisfying
$$P\gg \left(\frac{\widetilde{C}^{K/(d-1)+r^2}}{\mathfrak{S}(\vec{0})J(\vec{0})}\right)^{1/\delta}$$
we have that $M(P,\vec{0})> 1$ (if the implied constant is large enough). Moreover, 
The quantitative version of the Nullstellensatz for $\vec{\tilde{f}}$ given in (\ref{eq:defctilde}) does still hold.
Hence, mutatis mutandis,
the proof of \cref{prop:s0} applies and we find that $\mathfrak{S}(\vec{0}) \geq \tilde{N}^{-3(n-r)}$. Together with \cref{cor:J0} and (\ref{eq:defc}) it follows that
\begin{align*}
\mathfrak{S}(\vec{0})J(\vec{0}) &\gg \tilde{\mathfrak{C}}^{-(2(n-r)-1)}\widetilde{C}^{-2r(n-r)}\tilde{N}^{-n+r-1} \\
&\gg \tilde{\mathfrak{C}}^{-(3(n-r)-2)}\widetilde{C}^{-2r(n-r)}.
\end{align*}
One finds that one can take
\begin{align*}
P&\gg\widetilde{C}^{6n(n-r)r^n(d-1)^{n-2}d\cdot\frac{K+r(r+1)(d-1)}{K-r(r+1)(d-1)}}.\qedhere\end{align*}
\end{proof}

As already indicated in the introduction, we provide a quantitative strong approximation theorem for systems $\vec{f}$ satisfying the same conditions as in \cref{thm:main1}. Call $\vec{x}\in \r^n$ totally positive if $x_i>0$ for all $i$. 

\begin{thm}\label{thm:cormain}
Let $\vec{m}, \vec{M}\in \z^n$. Suppose $f_i\in \z[\vec{x}]$ for $i=1,\ldots, r$ are polynomials of degree $d$ so that
$K-r(r+1)(d-1)>0$ and the corresponding varieties $V$ and $\widetilde{V}$ are non-singular affine respectively projective varieties. Suppose that a zero $\vec{y}\in \z_p$ of $\vec{f}$ satisfying
$y_i\equiv m_i \mod M_i$ exists for every prime $p$ and suppose $\vec{\tilde{f}}$ has a totally positive real zero. Then, there exists an $\vec{x}\in \z_{>0}^n$, polynomially bounded by $C$ and $\widetilde{C}$, such that 
$$f(\vec{x})=\vec{0} \quad \text{and} \quad x_i \equiv m_i \mod M_i$$
and
$$|\vec{x}|\ll (|\vec{M}|^{5d}C^3\widetilde{C}^2)^{2n^2r^{n+1}(d-1)^{n-1}d\cdot\frac{K+r(r+1)(d-1)}{K-r(r+1)(d-1)}},$$
where the implied constant does not depend on $C, \widetilde{C}, \vec{m}$ or $\vec{M}$. 
\end{thm}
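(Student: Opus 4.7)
The plan is to reduce the theorem to \cref{thm:main1} via a linear change of variables that absorbs the congruence conditions into a new system of polynomials. After reducing each $m_i$ modulo $M_i$ we may assume $0\leq m_i<|M_i|$, so in particular $|\vec{m}|\leq|\vec{M}|$; if some $M_i=0$ the $i$-th coordinate is forced to equal $m_i$, reducing the problem to one in fewer variables, so we may assume all $M_i\neq 0$ and $|\vec{M}|\geq 1$. Writing $M=\mathrm{diag}(M_1,\ldots,M_n)$, I would set $\vec{g}(\vec{y}):=\vec{f}(\vec{m}+M\vec{y})$ and apply \cref{thm:main1} to $\vec{g}$, then recover $\vec{x}^{*}=\vec{m}+M\vec{y}^{*}$.

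The next step is to verify that the system $\vec{g}$ meets all the hypotheses of \cref{thm:main1}. Each $g_i$ is a polynomial of degree $d$ with top degree part $\tilde g_i(\vec{y})=\tilde f_i(M\vec{y})$. A direct expansion gives coefficient bounds $C_g\ll C|\vec{M}|^d$ and $\widetilde{C}_g\ll \widetilde{C}|\vec{M}|^d$, so $C_g^3\widetilde{C}_g^2\ll C^3\widetilde{C}^2|\vec{M}|^{5d}$. Since $M$ is an invertible diagonal linear transformation over $\overline{\q}$, the affine (respectively projective) variety defined by $\vec{g}$ (respectively $\tilde{\vec{g}}$) is isomorphic to $V$ (respectively $\widetilde{V}$), so non-singularity is preserved. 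Moreover, for every $\vec{b}\in\z^R\setminus\{0\}$ the chain rule yields $\Sing(\tilde g_{\vec{b}})=M^{-1}\Sing(\tilde f_{\vec{b}})$, which has the same dimension as $\Sing(\tilde f_{\vec{b}})$; hence $\Delta$ and $K$ are unchanged and $K-R(R+1)(d-1)>0$ still holds. A $p$-adic zero $\vec{y}_0\in\z_p^n$ of $\vec{f}$ with $y_{0,i}\equiv m_i\pmod{M_i}$ yields $(y_{0,i}-m_i)/M_i\in\z_p$ for each $i$, producing a $p$-adic zero of $\vec{g}$; and a real zero $\vec{x}_0$ of $\tilde{\vec{f}}$ transports to the real zero $M^{-1}\vec{x}_0$ of $\tilde{\vec{g}}$.

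Finally I would invoke \cref{thm:main1} for $\vec{g}$ to obtain an integer zero $\vec{y}^{*}\in\z^n$ with
\begin{equation*}
|\vec{y}^{*}|\ll(C_g^3\widetilde{C}_g^2)^{E}\ll(|\vec{M}|^{5d}C^3\widetilde{C}^2)^{E},\qquad E:=4n^3R(Rd)^n\cdot\tfrac{K+R(R+1)(d-1)}{K-R(R+1)(d-1)},
\end{equation*}
and set $\vec{x}^{*}=\vec{m}+M\vec{y}^{*}$, which satisfies $\vec{f}(\vec{x}^{*})=\vec{0}$ and $x_i^{*}\equiv m_i\pmod{M_i}$; the estimate $|\vec{x}^{*}|\leq|\vec{m}|+|\vec{M}|\,|\vec{y}^{*}|\ll(|\vec{M}|^{5d}C^3\widetilde{C}^2)^{E}$ then yields the claim. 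The step requiring the most care is the check that the Birch-type invariant $\Delta$ (and hence $K$) and the non-singularity of both $V$ and $\widetilde{V}$ genuinely transfer to $\vec{g}$ and $\tilde{\vec{g}}$ under the diagonal substitution; once this is in place, the coefficient bookkeeping producing the exponent $5d$ is routine and everything else follows mechanically from \cref{thm:main1}.
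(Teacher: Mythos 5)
Your proposal is correct and matches the paper's argument essentially line for line: substitute $\vec{g}(\vec{y})=\vec{f}(M\vec{y}+\vec{m})$, check that non-singularity, the $K$-condition, and the local solubility all transfer, track that the coefficients grow by at most a factor $|\vec{M}|^d$, and read off the bound from Theorem~\ref{thm:main1} applied to $\vec{g}$. The only (welcome) addition on your part is the explicit verification that $\Delta$ and $K$ are unchanged under the diagonal substitution, which the paper leaves implicit.
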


\begin{proof}
Let
$$\vec{g}(\vec{y})=\vec{f}(\vec{My}+\vec{m}) \quad \text{and} \quad \vec{\tilde{g}}(\vec{y})=\widetilde{\vec{f}(\vec{My}+\vec{m})}=\tilde{f}(\vec{My}),$$
 where $(\vec{My})_i=M_iy_i$. Observe that over $\overline{\q}$ we have that $\vec{f}$ is non-singular if and only if $\vec{g}$ is non-singular and similarly for $\vec{\widetilde{f}}$. Moreover, the condition on the existence of zeros of $\vec{f}$ ensures that $\vec{g}$ has zeros over $\z_p$ for all primes $p$ and that $\vec{\tilde{g}}$ has a totally positive zero over $\mathbb{R}$. After scaling, this zero lies in $\mathcal{B}\subset (0,1]^n$. Now, apply \cref{thm:main1}. The statement follows by noting that the maximal coefficient of $\vec{g}$ and $\vec{\tilde{g}}$ are $\ll |\vec{M}|^dC$ and $\ll|\vec{M}|^d\widetilde{C}$ respectively as we can assume without loss of generality that $|m_i|\leq |M_i|$ for all $i=1,\ldots, n$.
\end{proof}

\end{document}